\newtheorem{theorem}{\textbf{Theorem}}[section]
\newtheorem*{rmrk*}{Remark}
\newtheorem*{definition*}{Definition}
\newtheorem*{thm*}{Theorem}
\newtheorem*{prp*}{Proposition}
\newcommand{\halb}{\frac{1}{2}}
\newcommand{\quart}{\frac{1}{4}}
\newcommand{\A}{\mathbf{A}} 
\newcommand{\B}{\mathbf{B}} 
\newcommand{\E}{\mathbf{E}} 
\newcommand{\D}{\mathbf{D}} 
\renewcommand{\H}{\mathbf{H}} 
\newcommand{\I}{\mathbf{I}} 
\newcommand{\M}{\mathbf{M}} 
\newcommand{\Y}{\mathbf{Y}} 
\newcommand{\p}{\mathbf{p}} 
\newcommand{\q}{\mathbf{q}} 
\newcommand{\f}{\mathbf{f}} 
\newcommand{\x}{\mathbf{x}} 
\newcommand{\pd}{\partial} 
\newcommand{\err}{\mathcal{r}}
\newcommand{\normal}{\mathbf{n}} 
\newcommand{\FD}{{^*\hspace{-0.75mm}F}}
\newcommand{\en}{\mathcal{E}}
\begin{document}

%%%% Article title to be placed here
\title{\textbf{Variational derivation and compatible discretizations of the Maxwell-GLM system}}

% \author[1]{Author One}
% \author[2]{Author Two}
% \affil[1]{\small Affiliation of Auhtor One}
% \affil[2]{\small Affiliation of Auhtor Two}

\author[1]{Michael Dumbser\footnote{Corresponding author: \href{mailto:michael.dumbser@unitn.it}{michael.dumbser@unitn.it}}$^{,}$}
\author[1]{Alessia Lucca}
\author[1]{Ilya Peshkov}
\author[1]{Olindo Zanotti}
\affil[1]{\small Department of Civil, Environmental and Mechanical Engineering, University of Trento, Via Mesiano 77, 38123 Trento, Italy
}
\date{}
%%%% Subject entries to be placed here %%%%
% \subject{35K55, 35L65, 65M06, 65M08}

%%%% Keyword entries to be placed here %%%%
% \keywords{augmented Maxwell-GLM system, 
% 	symmetric hyperbolic and thermodynamically compatible (SHTC) systems, 
% 	Hamiltonian systems, 
% 	variational principle, hyperbolic thermodynamically compatible (HTC) finite volume schemes, asymptotic-preserving (AP) schemes}

%%%% Insert corresponding author and its email address}
% \corres{Michael Dumbser\\
% \email{michael.dumbser@unitn.it}}

%%%%%%%%%% Insert the texts which can accomdate on firstpage in the tag "fmtext" %%%%%
\maketitle

%%%% Abstract text to be placed here %%%%%%%%%%%%\err
\begin{abstract}
\noindent\small
We present a novel variational derivation of the Maxwell-GLM system, which
augments the original vacuum Maxwell equations via a generalized Lagrangian
multiplier approach (GLM) by adding two supplementary acoustic subsystems
and which was originally introduced by Munz \textit{et al.} for purely
numerical purposes in order to treat the divergence constraints of the
magnetic and the electric field in the vacuum Maxwell equations within
general-purpose and non-structure-preserving numerical schemes for
hyperbolic PDE. Among the many mathematically interesting features of the
model are: i) its symmetric hyperbolicity, ii) the extra conservation law
for the total energy density and, most importantly, iii) the very peculiar
combination of the basic differential operators, since both, curl-curl and
div-grad combinations are mixed within this kind of system. A similar
mixture of Maxwell-type and acoustic-type subsystems has recently been also
forwarded by Buchman \textit{et al.} in the context of a reformulation of
the Einstein field equations of general relativity in terms of tetrads. This
motivates our interest in this class of PDE, since the system is by itself
very interesting from a mathematical point of view and can therefore serve
as useful prototype system for the development of new structure-preserving
numerical methods. Up to now, to the best of our knowledge, there exists
neither a rigorous variational derivation of this class of hyperbolic PDE
systems, nor do exactly energy-conserving and  asymptotic-preserving schemes
exist for them.  
The objectives of this paper are to derive the Maxwell-GLM system from an
underlying variational principle, show its consistency with Hamiltonian
mechanics and special relativity, extend it to the general nonlinear case
and to develop new exactly energy-conserving and asymptotic-preserving
finite volume schemes for its discretization. 
\end{abstract}
%%%%%%%%%%%%%%%%%%%%%%%%%%%
{\small \textbf{Key words:} augmented Maxwell-GLM system, symmetric hyperbolic
	and thermodynamically compatible (SHTC) systems, Hamiltonian systems,
	variational principle, hyperbolic thermodynamically compatible (HTC) finite
	volume schemes, asymptotic-preserving (AP) schemes}

\section{Introduction}

Due to their mathematical elegance, since their discovery the Maxwell equations of electromagnetism \cite{MaxwellOrg} have inspired a vast amount of research in theoretical physics as well as in pure and applied mathematics. The first compatible numerical scheme for the discretization of the Maxwell equations in the time domain is due to Yee \cite{Yee66}, who introduced a suitable staggered mesh for the definition of the discrete electric and magnetic field in combination with compatible differential operators that guarantee that the discrete divergence of the discrete magnetic field is always zero and that the discrete divergence of the electric field vanishes in the absence of charges. Starting from the Yee scheme several different and more general numerical frameworks have been developed to guarantee the divergence-free property exactly at the discrete level, namely via the use of so-called mimetic finite differences \cite{HymanShashkov1997,Margolin2000,Lipnikov2014}, compatible finite volume schemes \cite{BalsaraCED,BalsaraKaeppeli,HazraBalsara} and compatible finite elements \cite{Nedelec1,Nedelec2,Hiptmair,Arnold2006,Monk,Alonso2015}. Much less schemes have been developed that preserve the curl-free property of a vector field, see e.g. \cite{JeltschTorrilhon2006,BalsaraCurlFree,SIGPR,SIST,Dhaouadi2023NSK}. Instead of imposing the divergence constraints on the electric and the magnetic field exactly, Munz \textit{et al.} in \cite{MunzCleaning} have introduced a very interesting approach called generalized Lagrangian multiplier (GLM) divergence cleaning, that resembles conceptually the concept of artificial compressibility applied in the numerical solution of the incompressible Navier-Stokes equations \cite{ChorinArtComp}. The idea of the GLM divergence cleaning is to admit divergence errors and to augment the original Maxwell equations by two additional acoustic subsystems so that the numerical divergence errors in the magnetic and electric field, which are generated by not exactly compatible general purpose schemes, do not accumulate locally but are rather transported away with an artificial cleaning speed $c_h$ via the acoustic subsystems that are coupled to the original Maxwell equations. A similar approach has recently also been forwarded for the numerical treatment of curl errors in curl-free vector fields, see \cite{dumbser2020glm,SHTCSurfaceTension,HyperbolicDispersion,Dhaouadi2022,TwoFluidDG}, leading to a similar combination of acoustic and Maxwell-type systems. The mathematical structure of the augmented Maxwell-GLM system is very intriguing and contains an interesting combination of curl-curl and div-grad operators, which is usually not present in classical PDE systems of continuum physics. 

Based on the groundbreaking work of Godunov \cite{God1961,God1961_2024eng}, which established for the first time a rigorous connection between underlying variational principle, thermodynamic compatibility and symmetric hyperbolicity in the sense of Friedrichs \cite{FriedrichsSymm}, a vast class of PDE systems of continuum physics has been subsequently studied and developed by Godunov and Romenski and collaborators in the last decades, the so-called class of symmetric hyperbolic and thermodynamically compatible (SHTC) systems, see \cite{God1972MHD,GodMHD_2024eng,GodRom1972,GodRom2003,Rom1998,RomenskiTwoPhase2007,RomenskiTwoPhase2010,PeshRom2014,PRD-Torsion2018,SHTC-GENERIC-CMAT} and references therein. The systems which fall into this class range from the compressible Euler and MHD equations over the equations of nonlinear hyperelasticity to the equations describing compressible multi-phase flows, superfluid helium and continuum mechanics with torsion. However, so far, in this very large class of systems no mixed coupling of curl-curl and div-grad operators was present, which is typical for the Maxwell-GLM system studied in this paper.  Yet, as will be shown later, the Maxwell-GLM system actually falls into the class of SHTC systems and can be derived from an underlying variational principle. This paper therefore extends the so far known class of SHTC systems.  

A very interesting system that has indeed a similar structure to the Maxwell-GLM system studied in this paper is the tetrad formulation of Buchman \textit{et al.} of the Einstein field equations of general relativity, see \cite{Buchman1,Buchman2}. For certain gauge choices the principal part of their PDE system reduces to a large set of Maxwell-type equations that are coupled with acoustic subsystems. Therefore, we believe that the Maxwell-GLM system studied in this paper is an interesting mathematical prototype system for other, more complex, field theories and is also suitable for the development of new structure-preserving numerical methods. 
The main objectives of this paper are therefore the following: 
\begin{enumerate}
	\item provide a new and rigorous derivation of the Maxwell-GLM system from an underlying variational principle;
	\item show the symmetric hyperbolicity of the system and the related additional extra conservation law for the total energy density; 
	\item study the asymptotic behaviour of the Maxwell-GLM system in the case when the cleaning speed $c_h$ tends to infinity;  
	\item extend the Maxwell-GLM system to more general nonlinear Lagrangian densities; this together with the variational principle and the extra conservation law allows to conclude that the Maxwell-GLM system falls indeed into the wider class of SHTC systems established by Godunov and Romenski and thus enlarges the known class of SHTC systems by a new one with different structure;  
	\item show the connection to Hamiltonian mechanics by providing a Poisson bracket from which the system can be also derived; 
	\item show the compatibility of this new variational principle with special relativity;
	\item develop new structure-preserving schemes for the Maxwell-GLM system and study their behaviour.
\end{enumerate} 
The rest of this paper is therefore organized as follows. In Section \ref{sec.model} we present the augmented Maxwell-GLM system together with its extra conservation law for the total energy density, we show its symmetric hyperbolicity, study the behaviour of the model in the limit when the cleaning speed tends to infinity and present a novel and rigorous variational derivation of this kind of PDE system. The only similar attempt we are aware of was documented in \cite{GLMLagrangian} but it is rather different from the variational approach presented in this paper. In Section \ref{sec.sr} we show the compatibility of the presented variational principle with special relativity, showing the connection with the Maxwell tensor and the Faraday tensor and that the Lagrangian density is a relativistic scalar invariant. In Section \ref{sec.numerics} we present two compatible discretizations. The first scheme is a semi-discrete finite volume method on collocated grids that preserves the total energy exactly and which also trivially extends to the nonlinear case. The second scheme is a vertex-based staggered semi-implicit scheme that preserves the basic vector calculus identities $\nabla \cdot \nabla \times \A = 0$ and $\nabla \times \nabla \phi = 0$ exactly on the discrete level and which is also exactly totally energy conservative. Some numerical results are presented in Section \ref{sec.results}. The paper closes with some concluding remarks and an outlook to future research in Section \ref{sec.conclusion}.      

\section{The augmented Maxwell-GLM system}
\label{sec.model} 

\subsection{Governing equations and extra conservation law}

The Maxwell-GLM system of Munz \textit{et al.} \cite{MunzCleaning},
which consists of the vacuum Maxwell equations augmented by two acoustic subsystems reads 
\begin{eqnarray}
	\partial_t \B + c_0 \nabla \times \E + c_h \nabla \phi &=& 0, \label{eqn.B} \\ 
	\partial_t \phi  + c_h \nabla \cdot \B  &=& 0. \label{eqn.phi}   \\	
	\partial_t \E - c_0 \nabla \times \B + c_h \nabla \psi &=& 0, \label{eqn.E} \\ 
\partial_t \psi  + c_h \nabla \cdot \E  &=& 0, \label{eqn.psi} 
\end{eqnarray}
In the following, we will refer to \eqref{eqn.B}-\eqref{eqn.psi} also as the \textit{Maxwell-Munz} system. 
Here, $c_0 \in \mathbb{R}^+$ is the vacuum light speed of the original Maxwell equations and $c_h \in \mathbb{R}^+$ is the speed associated with the two acoustic subsystems. Note that in the above system we have replaced the electric field of the original Maxwell equations by a rescaled electric field
$c_0 \E$, in order to make the symmetric hyperbolicity more evident. Throughout this paper we assume $c_0$ and $c_h$ to be constant in space and time. The above system has an interesting mixed combination of curl-curl and div-grad operators that goes beyond the standard Maxwell equations or those for linear acoustics. The same structure is present in the tetrad formulation of the Einstein field equations of Buchman \textit{et al.} \cite{Buchman1,Buchman2} for certain gauge choices, which makes the above system an interesting mathematical prototype system. In the following we show that the above system admits an extra conservation law for the total energy density 
\begin{equation}
	\mathcal{E} = \halb \left( \E^2 + \B^2 \right) + \halb \left( \psi^2 + \phi^2 \right).  \label{eqn.totalenergy} 
\end{equation}
We follow the pioneering work of Godunov \cite{God1961} and Romenski \cite{Rom1998} and introduce the state vector $\q = \left(  \B, \phi,\E, \psi \right)$, as well as the associated vector of thermodynamic \textit{dual} variables, the so-called \textit{main field} of Ruggeri \textit{et al.} \cite{Ruggeri81}, defined as $\p = \partial \mathcal{E} / \partial \q$. Furthermore, we introduce the so-called \textit{generating potential} $L$, which is the Legendre transform of $\en$ and is defined as
\begin{equation}
	L = \p \cdot \q - \en. 
\end{equation}
For the simple quadratic energy potential \eqref{eqn.totalenergy} we immediately obtain $\p = \q = \left( \B, \phi,\E, \psi \right)$ and $L=\en$.  
Multiplying \eqref{eqn.B}-\eqref{eqn.psi} with the dual variables $\p$ and summing up
\begin{equation}\label{eqn.sum}
	\B\cdot\eqref{eqn.B}+\phi\cdot\eqref{eqn.phi}+\E\cdot\eqref{eqn.E}+\psi\cdot\eqref{eqn.psi}
\end{equation}
yields, after simple calculations, the following total energy conservation law
\begin{equation}
	\partial_t \en + \nabla \cdot \left( c_0 \E \times \B + c_h (\psi \E + \phi \B) \right) = 0\,, 
	\label{eqn.energy} 
\end{equation}
where $\E \times \B$ is the Poynting vector. It is obvious that for initial data that are compatible with the original vacuum Maxwell equations, i.e. that satisfy $\nabla \cdot \B = 0$, $\nabla \cdot \E = 0$ and $\phi=const$, $\psi=const$ at the initial time $t=0$ the divergence of the magnetic and of the electric field remain zero for all times. 

\subsection{Symmetric hyperbolicity}

The system \eqref{eqn.B}-\eqref{eqn.psi} is \textit{symmetric hyperbolic} in the sense of Friedrichs \cite{FriedrichsSymm}, since it can be written in the form 
\begin{equation} 
	\M \partial_t \p + \H_k \partial_k \p = 0,
\end{equation} 
with $\M = L_{\p\p} = \I$, where $\I$ is the identity matrix and the matrices $\H_k$ are given by 
\begin{equation}\label{eqn.H.1}
 \H_1 = \left( \begin{array}{cccccccc} 
 	0 & 0 & 0 & c_h & 0 & 0 & 0 &  0 \\ 
	0 & 0 & 0 & 0 & 0 & 0 & -c_0 & 0 \\
	0 & 0 & 0 & 0 & 0 &  c_0 & 0 & 0 \\
	 c_h & 0 & 0 & 0 & 0 & 0 & 0 & 0 \\ 
 	0 & 0 & 0 & 0 & 0 & 0 & 0 & c_h \\ 
 	0 & 0 & c_0 & 0 & 0 & 0 &  0 & 0 \\
 	0 & -c_0 & 0 & 0 & 0 & 0 & 0 & 0 \\
 	0 & 0 & 0 & 0 &  c_h & 0 & 0 & 0 \\ 
 \end{array} \right),   	 
\end{equation}
\begin{equation}\label{eqn.H.2}
 \H_2 = \left( \begin{array}{cccccccc} 
	0 & 0 &  0 & 0 & 0 & 0 & c_0 & 0 \\ 
	0 & 0 & 0 & c_h & 0 & 0 & 0 &  0 \\
	0 & 0 & 0 & 0 & -c_0 & 0 & 0 & 0 \\
	0 & c_h & 0 & 0 & 0 &  0 & 0 & 0 \\ 
	0 & 0 & -c_0 & 0 & 0 & 0 & 0 & 0 \\ 
	0 & 0 & 0 &  0 & 0 & 0 & 0 & c_h \\
	c_0 & 0 & 0 & 0 &  0 & 0 & 0 & 0 \\
	0 & 0 & 0 & 0 & 0 & c_h & 0 & 0 \\ 
\end{array} \right),   	 
\end{equation}
\begin{equation}\label{eqn.H.3}
	\H_3 = \left( \begin{array}{cccccccc} 
		0 & 0 & 0 & 0 & 0 & -c_0 & 0 & 0 \\ 
		0 & 0 & 0 & 0 & c_0 & 0 & 0 & 0 \\
		0 & 0 & 0 & c_h & 0 & 0 & 0 & 0 \\
		0 & 0 & c_h & 0 & 0 & 0 &  0 & 0 \\ 
		0 & c_0 & 0 & 0 & 0 &  0 & 0 & 0 \\ 
		-c_0 & 0 & 0 & 0 & 0 & 0 & 0 & 0 \\
		0 & 0 & 0 &  0 & 0 & 0 & 0 & c_h \\
		0 & 0 & 0 & 0 & 0 & 0 & c_h & 0 \\ 
	\end{array} \right). 
\end{equation}
The matrices $\H_i$ are obviously symmetric and the matrix $\M=\I$ is clearly symmetric and strictly positive definite, which is an immediate consequence of the strict convexity of the quadratic total energy potential. We therefore conclude that system \eqref{eqn.B}-\eqref{eqn.psi} is symmetric hyperbolic in the sense of Friedrichs \cite{FriedrichsSymm}. For the sake of completeness, we also give the eigenvalues and eigenvectors of the system:
The eigenvalues of all matrices $\H_i$ are 
\begin{equation}
	\lambda_{1,2} = -c_h, \qquad 
	\lambda_{3,4} = -c_0, \qquad 
	\lambda_{5,6} = +c_0, \qquad 
	\lambda_{7,8} = +c_h.
\end{equation} 	
It is important to realize that the eigenvalues associated to $c_h$ do not correspond to the propagation of any physical signal, which allows us to analyze the case when $c_h>c_0$. 
The associated matrix of right eigenvectors for $\H_1$ reads 
\begin{equation}
	 \mathbf{R} = \left( \begin{array}{cccccccc} 
	 	-1 &  0 &  0 &  0 &  0 &  0 & 0 & 1 \\ 
	 	 0 &  0 &  1 &  0 &  0 & -1 & 0 & 0 \\
	 	 0 &  0 &  0 & -1 &  1 &  0 & 0 & 0 \\
	 	 1 &  0 &  0 &  0 &  0 &  0 & 0 & 1 \\ 
	 	 0 & -1 &  0 &  0 &  0 &  0 & 1 & 0 \\ 
	 	 0 &  0 &  0 &  1 &  1 &  0 & 0 & 0 \\
	 	 0 &  0 &  1 &  0 &  0 &  1 & 0 & 0 \\
	 	 0 &  1 &  0 &  0 &  0 &  0 & 1 & 0 \\ 
	 \end{array} \right). 
\end{equation}
According to \cite{Kato1975,Serre2007} the symmetric hyperbolicity of the system guarantees well-posedness of the initial value problem, at least for short times. % question: for linear systems it should be even global, right?
% IP: for constant coefficients, I think - yes, but for variable coefficients we need to consider some conditions on smoothness of the coefficients. 

\subsection{Formal asymptotic limit for $c_h \to \infty$} 

In this section we carry out a formal asymptotic analysis of the Maxwell-Munz system in the case of $c_h \to \infty$ for constant $c_0$, i.e. when the dimensionless parameter $\epsilon = c_0 / c_h \to 0$. In this paper we follow the procedure outlined in \cite{klein,roller2} for the compressible Navier-Stokes equations, while a mathematically rigorous analysis of the incompressible limit of the compressible Euler, Navier-Stokes and MHD equations was provided in \cite{KlaMaj,KlaMaj82}. Similar to  \cite{KlaMaj,KlaMaj82} we assume well-prepared initial data. In particular, we require that $\nabla \cdot \B(\x,0) = \mathcal{O}(\epsilon^2)$ and $\nabla \cdot \E(\x,0) = \mathcal{O}(\epsilon^2)$ hold, as well as $\phi(\x,0) = \phi_0 = const$ and $\psi(\x,0) = \psi_0 = const$. Expanding all variables in a power series of $\epsilon$, the so-called Hilbert expansion, yields 
\begin{eqnarray}
	\B &=& \B_0 + \epsilon \B_1 + \epsilon^2 \B_2 + \mathcal{O}(\epsilon^3), 
	\label{eqn.B.series} \\ 
	\phi &=& \phi_0 + \epsilon \phi_1 + \epsilon^2 \phi_2 + \mathcal{O}(\epsilon^3), 
	\label{eqn.phi.series} \\ 
	\E &=& \E_0 + \epsilon \E_1 + \epsilon^2 \E_2 + \mathcal{O}(\epsilon^3), 
	\label{eqn.E.series} \\ 
	\psi &=& \psi_0 + \epsilon \psi_1 + \epsilon^2 \psi_2 + \mathcal{O}(\epsilon^3). 
	\label{eqn.psi.series} 
\end{eqnarray}
Inserting the above series expansions into all equations \eqref{eqn.B}-\eqref{eqn.psi} divided by $c_0$ and collecting terms with equal powers in $\epsilon = c_0/c_h$ leads to  
\begin{eqnarray}
	\epsilon^{-1} \nabla \phi_0 + \epsilon^0 \left( c_0^{-1} \partial_t \B_0 + \nabla \times \E_0 + \nabla \phi_1 \right) + \mathcal{O}(\epsilon) &=& 0, \label{eqn.B.eps} \\ 
	\epsilon^{-1} \nabla \cdot \B_0 + \epsilon^0 \left( c_0^{-1} \partial_t \phi_0  + \nabla \cdot \B_1 \right)  + \mathcal{O}(\epsilon) &=& 0. \label{eqn.phi.eps}   \\	
	\epsilon^{-1} \nabla \psi_0 + \epsilon^0 \left( c_0^{-1} \partial_t \E_0 - \nabla \times \B_0 + \nabla \psi_1 \right) + \mathcal{O}(\epsilon) &=& 0, \label{eqn.E.eps} \\ 
	\epsilon^{-1} \nabla \cdot \E_0 + \epsilon^0 \left( c_0^{-1} \partial_t \psi_0  + \nabla \cdot \E_1 \right)  + \mathcal{O}(\epsilon) &=& 0. \label{eqn.psi.eps}  	
\end{eqnarray}
Note that all quantities $\B$, $\phi$, $\E$ and $\psi$ must be \textit{bounded} due to the energy conservation principle \eqref{eqn.energy}. The equations \eqref{eqn.phi.eps} and \eqref{eqn.psi.eps} immediately provide the result that the leading order magnetic field and the leading order electric field must become divergence-free for $\epsilon \to 0$, i.e. 
\begin{equation}
	\nabla \cdot \B_0 = 0, \qquad \textnormal{and} \qquad 
	\nabla \cdot \E_0 = 0. 
	\label{eqn.div.O0} 
\end{equation}
Furthermore, from \eqref{eqn.B.eps} and \eqref{eqn.E.eps} we obtain that 
\begin{equation}
	\nabla \phi_0 = 0, \qquad \textnormal{and} \qquad 
	\nabla \psi_0 = 0,
\end{equation}
which means that for $\epsilon \to 0$ the leading order terms of the two scalar fields become constant in space and thus depend only on time, i.e. $\phi_0 = \phi_0(t)$ and $\psi_0 = \psi_0(t)$. Taking the terms of order $\epsilon^0=1$ in \eqref{eqn.phi.eps} and \eqref{eqn.psi.eps} yields 
\begin{equation}
	c_0^{-1} \partial_t \phi_0 + \nabla \cdot \B_1 = 0, 
	\qquad 
	\textnormal{and}
	\qquad 
	c_0^{-1} \partial_t \psi_0 + \nabla \cdot \E_1 = 0.
\end{equation} 
Since $\phi_0$ and $\psi_0$ are constant in space and only functions in time, application of the Gauss theorem provides the time rate of change of both $\phi_0$ and $\psi_0$ as 
\begin{equation}
	\frac{d \phi_0}{dt} =  - c_0 \int \limits_{\partial \Omega} \B_1 \cdot \mathbf{n} dS, 
	\qquad 
	\textnormal{and}
	\qquad 
	\frac{d \psi_0}{dt} =  - c_0 \int \limits_{\partial \Omega} \E_1 \cdot \mathbf{n} dS. 
	\label{eqn.gauss} 
\end{equation} 
Assuming that the following \textit{boundary conditions} hold for both, the magnetic and the electric field 
\begin{equation}
	\B \cdot \mathbf{n} = 0, 	 \qquad \forall \x \in \partial \Omega, \qquad  \qquad 
	\E \cdot \mathbf{n} = 0, 	 \qquad \forall \x \in \partial \Omega 
	\label{eqn.BC} 
\end{equation}
we obtain from \eqref{eqn.gauss} that $\frac{d \phi_0}{dt} = \frac{d \psi_0}{dt} = 0$. 
Hence, from the terms of order $\epsilon^0$ of \eqref{eqn.phi.eps} and \eqref{eqn.psi.eps} we find that
with the boundary conditions \eqref{eqn.BC} we also have 
\begin{equation}
	\nabla \cdot \B_1 = 0, \qquad \textnormal{and} \qquad \nabla \cdot \E_1 = 0. 
	\label{eqn.div.O1} 
\end{equation}
Thus, from 
\eqref{eqn.B.series} and \eqref{eqn.E.series} 
%\eqref{eqn.div.O0} and \eqref{eqn.div.O1} 
we therefore can conclude that for $\epsilon \to 0$ the divergence errors of the magnetic and of the electric field scale quadratically in $\epsilon$, i.e. 
\begin{equation}
	 \nabla \cdot \B = \mathcal{O}(\epsilon^2), \qquad \textnormal{and} \qquad 
	 \nabla \cdot \E = \mathcal{O}(\epsilon^2).
\end{equation} 
Note that this is not a mathematically rigorous proof, but just a formal asymptotic analysis. We also emphasize the importance of the assumption of well-prepared initial data.

\subsection{Underlying variational principle} 

In order to derive \eqref{eqn.B}-\eqref{eqn.psi} from an underlying variational principle we introduce a vector potential $\A$ and a scalar potential $Z$. We then make the following \textit{definitions}: 
\begin{equation}
	\E = - \frac{1}{c_0}\partial_t \A, \qquad \phi = - \frac{1}{c_h}\partial_t Z, \label{eqn.dynamic} 
\end{equation}
and 
\begin{equation}
	\B =  \nabla \times \A +  \nabla Z, \qquad \psi = \frac{c_h}{c_0} \nabla \cdot \A. \label{eqn.spatial}
\end{equation}
We also introduce the additional quantity 
\begin{equation}
	\Y = \nabla Z.
\end{equation}
Since $c_0$ and $c_h$ are constant, an \textit{immediate consequence} of the above definitions combined with classical vector calculus identities are the following relations: 
\begin{equation}
	\nabla \cdot \B =  \nabla^2 Z =  \nabla \cdot \Y, \label{eqn.pressurecorrection} 	
\end{equation}
and
\begin{equation}
	\nabla \times \B = \nabla \times \nabla \times \A, \label{eqn.rotB} 	
\end{equation}
the first one of which is similar in structure to the pressure-correction equation in the context of numerical methods for the incompressible Navier-Stokes equations \cite{HW65,patankar,PS72}, while the second one is classical in electrodynamics. Furthermore, applying the time derivative to \eqref{eqn.spatial} and using the definitions \eqref{eqn.dynamic} we immediately obtain the time evolution equation for the magnetic field as 
\begin{equation}
	\partial_t \B -  \nabla \times \partial_t \A -  \nabla \partial_t Z = 0, \qquad 
	\textnormal{hence} \qquad  
	\partial_t \B + c_0 \nabla\times \E + c_h \nabla \phi = 0,
	\label{eqn.B2} 
\end{equation}
and the time evolution equation for the scalar field $\psi$ as 
\begin{equation}
	\partial_t \psi - \frac{c_h}{c_0} \nabla \cdot \partial_t \A = 0, \qquad 
	\textnormal{hence} \qquad  
	\partial_t \psi + c_h \nabla \cdot \E = 0. 
	\label{eqn.psi2} 
\end{equation}
It is obvious that \eqref{eqn.B2} and \eqref{eqn.psi2} are identical with \eqref{eqn.B} and \eqref{eqn.psi}, respectively. 
A further immediate consequence of the above definitions is the PDE
\begin{equation}
	\partial_t \Y + \nabla \phi = 0, \label{eqn.Y} 
\end{equation}
but for which we have so far found no use. 
In order to obtain the missing governing PDE for $\E$ and $\phi$, we now postulate the following Lagrangian density in terms of $\A$ and $Z$: 
\begin{equation}
	\Lambda = \halb \left[ \left(\frac{1}{c_0} \partial_t \A \right)^2 -  \left( \nabla \times \A \right)^2 -  \left(\frac{c_h}{c_0} \nabla \cdot \A \right)^2 + \left(\frac{1}{c_h}\partial_t Z \right)^2 -  \nabla Z \cdot \nabla Z \right], 
	\label{eqn.Lagrangian}
\end{equation}
or, in index notation, 
\begin{equation}
	\Lambda = \halb \left(  \frac{1}{c_0^2}\partial_t A_i \partial_t A_i -  \epsilon_{lpq} \partial_p A_q  \epsilon_{lrs} \partial_r A_s - \frac{c_h^2}{c_0^2} \partial_r A_r \delta_{pq} \partial_p A_q + \left( \frac{1}{c_h}\partial_t Z \right)^2 -  \partial_i Z \partial_i Z \right)\,,
	\label{eqn.Lagrangian}
\end{equation}
where $\delta_{ik}$ is the usual Kronecker delta, while $\epsilon_{ijk}$ is the three-dimensional Levi-Civita tensor.
% \begin{equation}
% 	\Lambda = \halb \left(  A_{i,t} A_{i,t} - c_0^2 \epsilon_{lpq} A_{q,p}  \epsilon_{lrs} A_{s,r} - c_h^2 A_{r,r} \delta_{pq} A_{q,p} + \left( Z_{,t} \right)^2 - c_h^2 Z_{,i} Z_{,i} \right). 
% 	\label{eqn.Lagrangian}
% \end{equation}
%
We now state the following \textit{variational principle}: 
Find $\A$ and $Z$ so that the action 
\begin{equation}
 	S[A_k,Z] = \int_{\Omega} \Lambda \, \mathrm{d}\x \mathrm{d} t
 	\label{eqn.action} 
\end{equation}
is minimized. 
\noindent 
The first Euler-Lagrange equation related to \eqref{eqn.action} and the scalar potential $Z$ reads ($Z_{,t}:= \partial_t Z $, $Z_{,k}:=\partial_k Z$): 
\begin{equation}
	-\frac{\partial}{\partial t} \frac{\partial \Lambda}{\partial Z_{,t}} - \frac{\partial}{\partial x_k} \frac{\partial \Lambda}{\partial Z_{,k}} + \frac{\partial \Lambda}{\partial Z} = 0, 	 
\end{equation}
which immediately becomes 
\begin{equation}
	-\frac{1}{c_h^2}\frac{\partial Z_{,t}}{\partial t} +  \delta_{ik} \frac{\partial Z_{,i}}{\partial x_k} = 0\,. 	 
	\label{eqn.el.phi} 
\end{equation}
\begin{equation}
	\frac{1}{c_h}\partial_t \phi +  \nabla^2 Z = 0,
\end{equation}
which thanks to \eqref{eqn.pressurecorrection} becomes the final evolution equation of $\phi$, i.e. \eqref{eqn.phi}: 
\begin{equation}\label{eqn.phi2}
	\partial_t \phi + c_h \nabla \cdot \B = 0.
\end{equation}
Likewise, the Euler-Lagrange equation related to \eqref{eqn.action} and the vector potential $\A$ reads ($A_{i,t}:=\partial_t A_i $, $A_{i,k}:=\partial_k A_i $):  
\begin{equation}
	-\frac{\partial}{\partial t} \frac{\partial \Lambda}{\partial A_{i,t}} - \frac{\partial}{\partial x_k} \frac{\partial \Lambda}{\partial A_{i,k}} + \frac{\partial \Lambda}{\partial A_i} = 0, 	 
\end{equation}
which translates into 
\begin{equation}
	 -\frac{1}{c_0^2}\frac{\partial A_{i,t}}{\partial t} +   \frac{\partial}{\partial x_k} \epsilon_{lpq} A_{q,p} \epsilon_{lrs} \delta_{kr} \delta_{is}  + \frac{c_h^2}{c_0^2} \frac{\partial}{\partial x_k} A_{r,r} \delta_{pq} \delta_{pk} \delta_{qi}  = 0, 
\end{equation}
or, equivalently, 
\begin{equation}
	-\frac{1}{c_0^2}\frac{\partial A_{i,t}}{\partial t} + \frac{\partial}{\partial x_k}  \epsilon_{lpq} A_{q,p} \epsilon_{lki}  + \frac{c_h^2}{c_0^2}  \frac{\partial}{\partial x_k} A_{r,r} \delta_{ki} = 0. 
	\label{eqn.el.A}
\end{equation}
In standard vector notation and using the identities \eqref{eqn.dynamic} and \eqref{eqn.rotB}, the above equation finally becomes 
\begin{equation}\label{eqn.E2}
  \partial_t \E - c_0 \nabla \times \B + c_h \nabla \psi = 0, 
\end{equation}
which is identical with \eqref{eqn.E}. We therefore have established the underlying variational principle from which the Maxwell-Munz system can be derived. We stress that the procedure shown in this paper is different from the one introduced in \cite{GLMLagrangian}. 

It is interesting to note that the PDE for the magnetic field $\B$ \eqref{eqn.B} and for the scalar field $\psi$ are immediate consequences of the definitions, while the PDE for the electric field $\E$ and for the scalar quantity $\phi$ are the Euler-Lagrange equations associated with the variational principle \eqref{eqn.Lagrangian}. As usual in the framework of SHTC equations \cite{Rom1998}, the equations come in pairs. One being an Euler-Lagrange equation, the other one being a mere consequence of the definitions.  

\subsection{Variational formulation for arbitrary nonlinear Lagrangian}

Let us show that the variational derivation described in the previous section
can be generalized to an arbitrary Lagrangian, which might be important for
establishing connections between the Maxwell-GLM system and other nonlinear
systems with the similar structure, e.g. \cite{Buchman1,Buchman2}.
Let us consider an action integral for a vector $A_k$ and a scalar potential $Z$:  
\begin{equation}
	S[A_k,Z] = \int \Lambda(A_k,Z; \pd_t A_k, \pd_t Z, \pd_k A_j, \pd_k Z) \mathrm{d}\x \mathrm{d}t.
\end{equation}
We are interested in reparametrizing the Lagrangian $\Lambda$ in the following variables
\begin{alignat}{2}
	&E_k = -\pd_t A_k =-A_{k,t}, 						&& \varphi =-\pd_t Z=-Z_{,t}\, ,\label{eqn.definitions1}\\
	&B_k = \epsilon_{kij} \pd_i A_j + \pd_k Z = \epsilon_{kij} A_{j,i} + Z_{,k}, \qquad 	&& \psi = \pd_k A_k = A_{k,k}\, ,\label{eqn.definitions2}
\end{alignat}
i.e.
\begin{equation}
	\mathcal{L}(E_k,B_k,\varphi,\psi) = \Lambda(A_k,Z; \pd_t A_k, \pd_t Z, \pd_k A_j, \pd_k Z).
\end{equation}
Note that the definitions \eqref{eqn.definitions1} and \eqref{eqn.definitions2} 
differ from \eqref{eqn.dynamic} and \eqref{eqn.spatial} only by the scaling
factors, which, without the loss of generality, are absorbed into the fields $E_k$, $\varphi$, $B_k$, and $\psi$
in this section.
The relations between the derivatives of the parametrizations $\mathcal{L}$ and $\Lambda$ read
 \begin{alignat}{2}
	\frac{\pd \Lambda}{\pd A_{k,t}} & =-\frac{\pd \mathcal{L}}{\pd E_k},
	\qquad
	&\frac{\pd\Lambda}{\pd Z_t} 		 = -\frac{\pd \mathcal{L}}{\pd \varphi},\\
	\frac{\pd\Lambda}{\pd A_{k,j}}  & = \epsilon_{jki} \frac{\pd \mathcal{L}}{\pd B_i} + \delta_{kj} \frac{\pd \mathcal{L}}{\pd \psi},
	\qquad
	&\frac{\pd\Lambda}{\pd Z_k} 		 = \frac{\pd \mathcal{L}}{\pd B_k}.
\end{alignat}
Therefore, assuming $\pd \Lambda/\pd A_k = 0$ and $\pd \Lambda/ \pd Z = 0$, the
variation with respect to $A_k$ and $Z$ gives the Euler-Lagrange equations (for
brevity, we adopt the  notations $\mathcal{L}_{E_k} = \frac{\pd \mathcal{L}}{\pd
E_k}$, etc.):
\begin{align}
	\frac{\pd \mathcal{L}_{E_k}}{\pd t} & + \epsilon_{kij} \pd_i \mathcal{L}_{B_j} - \pd_k \mathcal{L}_\psi = 0, \label{eq:EL.E}\\
	\frac{\pd \mathcal{L}_\varphi}{\pd t} & -\pd_k \mathcal{L}_{B_k} = 0,\label{eq:EL.phi}
\end{align}
that must be accompanied by the integrability conditions (consequences of the definitions \eqref{eqn.definitions1}, \eqref{eqn.definitions2}):
\begin{align}
	\frac{\pd B_k}{\pd t} & + \epsilon_{kij} \pd_i E_j + \pd_k \varphi = 0,\label{eq:integrability.B}\\	
	\frac{\pd \psi}{\pd t} & + \pd_k E_k = 0.
	\label{eq:integrability.psi}
\end{align}
After applying a partial Legendre transform to
$\mathcal{L}(E_k,B_k,\varphi,\psi)$ and introducing the new state variables 
\begin{equation}
	D_k := \mathcal{L}_{E_k}, \qquad \phi := \mathcal{L}_\varphi,
	\qquad
	\en(D_k,B_k,\phi,\psi) := E_k \mathcal{L}_{E_k} + \varphi \mathcal{L}_\varphi - \mathcal{L},
\end{equation}
with the following relations between the old and new parametrization
\begin{equation}
	\en_{B_k} = -\mathcal{L}_{B_k}, 
	\qquad 
	\en_{\psi} = -\mathcal{L}_{\psi},
	\qquad
	E_k = \en_{D_k},
	\qquad
	\varphi = \en_{\phi},
\end{equation}
equations \eqref{eq:EL.E}-\eqref{eq:EL.phi} and \eqref{eq:integrability.B}-\eqref{eq:integrability.psi} become:
\begin{align}
	\frac{\pd B_k}{\pd t} + \epsilon_{kij} \pd_i \en_{D_j} + \pd_k \en_\phi &= 0,\label{eq:SHTC.B}\\
	\frac{\pd \phi}{\pd t} + \pd_k \en_{B_k} &= 0, \\
	\frac{\pd D_k}{\pd t} - \epsilon_{kij} \pd_i \en_{B_j} + \pd_k \en_\psi  &= 0,\\
	\frac{\pd \psi}{\pd t} + \pd_k \en_{D_k} &= 0. \label{eq:SHTC.psi}
\end{align}
This nonlinear system can be immediately symmetrized in the dual variables
$\p=(\en_{B_k},\en_{\phi},\en_{D_k},\en_{\psi})$  and in the new potential (full Legendre transform)
\begin{equation}
	L(\p) = D_k \en_{D_k} + B_k \en_{B_k} + \phi \en_{\phi} + \psi \en_{\psi} - \en
\end{equation}
because it has exactly the same symmetric structure of the spatial part as in
\eqref{eqn.H.1}-\eqref{eqn.H.3}, and therefore, it can be called a symmetric
hyperbolic system in the case the potential $\en$ is convex, e.g. see
\cite{Rom1998}. Moreover, note that system
\eqref{eq:SHTC.B}--\eqref{eq:SHTC.psi} has an extra conservation law (it can be
trivially obtained by multiplying each equation by the corresponding dual
variable and summing them up, exactly as in \eqref{eqn.sum}):
\begin{equation}\label{eqn.energy.nonlinear}
	\frac{\pd \en}{\pd t} + \pd_k \left( \epsilon_{kij} \en_{D_i} \en_{B_j} + \en_\psi \en_{D_k} + \en_\phi \en_{B_k} \right) = 0.
\end{equation}
It therefore can be categorized as a symmetric hyperbolic thermodynamically compatible system \cite{Rom1998}.
Let us show how this system can be reduced to the Maxwell-GLM system. Consider
the following energy potential
\begin{equation}\label{eqn.energy2}
	\en = \frac{1}{2} \left( \frac{1}{\varepsilon} \Vert \D \Vert^2 + \frac{1}{\mu} \Vert \B \Vert^2\right)
	+
	\frac{1}{2}\left( \alpha^2 \phi^2 + \beta^2 \psi^2 \right),
	\qquad 
	c_0^2 = \frac{1}{\varepsilon \mu}.
\end{equation}
and assume that 
\begin{equation}
	\alpha^2 = \mu c_h^2, \qquad \beta^2 = \varepsilon c_h^2,
\end{equation}
and let us also rescale the fields as
\begin{equation}
	\hat{B}_k := B_k,
	\qquad
	\hat{E}_k := \frac{1}{c_0} E_k,
	\qquad
	\hat{\phi} := c_h \mu \phi,
	\qquad
	\hat{\psi} := \frac{c_h}{c_0}\psi.
\end{equation}
After this, system \eqref{eq:SHTC.B}-\eqref{eq:SHTC.psi} becomes the Maxwell-GLM system:
\begin{align}
	\frac{\pd \hat{B}_k}{\pd t} + c_0 \epsilon_{kij} \pd_i \hat{E}_j + c_h \pd_k \hat{\phi} &= 0,\\
	\frac{\pd \hat{\phi}}{\pd t} + c_h \pd_k \hat{B}_k &= 0,\\	
	\frac{\pd \hat{E}_k}{\pd t} - c_0\epsilon_{kij} \pd_i \hat{B}_j + c_h \pd_k \hat{\psi}  &= 0,\\
	\frac{\pd \hat{\psi}}{\pd t} + c_h \pd_k \hat{E}_k &= 0.
\end{align}
%
% Remark that in these new variables, energy potential \eqref{eqn.energy2} becomes
% \begin{equation}
% 	\en = \varepsilon c_0^2 \left( \frac{1}{2} \left( \Vert \hat{\E} \Vert^2 +  \Vert \B \Vert^2 \right) + \frac{1}{2} \left( \hat{\phi}^2 + \hat{\psi}^2 \right)\right).
% \end{equation}

\subsection{Hamiltonian nature of the Maxwell-GLM system}

An intimate connection between the Lagrangian and Hamiltonian formulations of
mechanics is well-established, see e.g. \cite{Hamill2013}. It is also known that
in addition to the Lagrangian formulation, the Maxwell equations possess a
Hamiltonian formulation, i.e. their time evolution PDEs can be generated by a
Poisson bracket, see e.g. \cite[Sec.3.10.1]{PKG_Book2018} or
\cite{SHTC-GENERIC-CMAT}. The Hamiltonian formulation of mechanics and
electromagnetism is a powerful theoretical tool as it is well-known from the
history of theoretical physics, as well as it is important for designing
numerical methods, e.g. see \cite{Morrison2017}. It is therefore interesting to
investigate if the Maxwell system with the GLM divergence cleaning still
possesses a Hamiltonian formulation. 

The answer to this question is positive. Indeed, using the ``reverse engineering''
approach (i.e. starting from the PDEs), see \cite[Sec.\,A.5]{PKG_Book2018} or
\cite[Eq.(68)-(69)]{SHTC-GENERIC-CMAT}, the following Poisson bracket
for system \eqref{eq:SHTC.B}-\eqref{eq:SHTC.psi}, and subsequently for the
Maxwell-Munz system \eqref{eqn.B}-\eqref{eqn.psi}, can be obtained
\begin{multline}
	\left\{ \mathcal{A},\mathcal{B} \right\}^{(B_k,D_k,\psi,\phi)} 
	= \int \Big(
		\left[ \mathcal{A}_{D_k} \left( \varepsilon_{kij}\pd_i \mathcal{B}_{B_j} - \pd_k \mathcal{B}_\psi\right) - \left( \varepsilon_{kij}\pd_i \mathcal{A}_{B_j} - \pd_k \mathcal{A}_\psi\right) \mathcal{B}_{D_k} \right] 
	+ \\
	\left[ \mathcal{A}_{\phi} \left( -\pd_k \mathcal{B}_{B_k} \right) - \left(-\pd_k \mathcal{A}_{B_{k}} \right) \mathcal{B}_{\phi} \right]
	\Big)
	\mathrm{d}\x,
	\label{eqn.PB}
\end{multline}
where $\mathcal{A}$ and $\mathcal{B}$ are arbitrary sufficiently smooth
functionals of the variables $(B_k,D_k,\psi,\phi)$ and possibly of their first
spatial gradients, and $\mathcal{A}_{B_k}$, $\mathcal{B}_{B_k}$, etc. are the
functional derivatives with respect to $B_k$, $D_k$, $\psi$, $\phi$,
respectively, i.e. $\mathcal{A}_{B_k} = \frac{\delta \mathcal{A}}{\delta B_k}$,
etc. However, as it follows from the previous discussion, we assume that the
functionals do not depend on the gradients of the fields, and therefore their
functional derivatives are equal to the partial derivatives, i.e. $\mathcal{A}_{B_k}
= \frac{\pd \mathcal{A}}{\pd B_k}$, etc.

The Poisson bracket \eqref{eqn.PB}, however, is not canonical, and hence the Jacobi
identity must be checked for it. The Jacobi identity was checked with the help
of the symbolic computation software Mathematica, and in particular with the
package \cite{krogerHutter2010}, and it was found that the Jacobi identity holds
for the bracket \eqref{eqn.PB}. 

To see how equations \eqref{eq:SHTC.B}-\eqref{eq:SHTC.psi} unfold from the
Poisson bracket $\left\{ \mathcal{A},\mathcal{B}
\right\}^{(B_k,D_k,\psi,\phi)}$, one can apply integration by parts in
\eqref{eqn.PB}, and rewrite the bracket as
\begin{align}
	\left\{ \mathcal{A},\mathcal{B} \right\}^{(B_k,D_k,\psi,\phi)} = \int 
	\Big(
	&\mathcal{A}_{B_k} \left( -\varepsilon_{kij} \pd_i \mathcal{B}_{D_j} - \pd_k \mathcal{B}_\phi \right) +
	\mathcal{A}_{\phi} \left( -\pd_k \mathcal{B}_\phi \right) + \nonumber\\
	&\mathcal{A}_{D_k} \left( \varepsilon_{kij} \pd_i \mathcal{B}_{B_j} - \pd_k \mathcal{B}_\psi \right) +
	\mathcal{A}_{\psi} \left( -\pd_k\mathcal{B}_{D_k} \right) 
	\Big)\mathrm{d} \x.
	\label{eqn.PB.time.evol}
\end{align}

On the other hand, in the Hamiltonian formalism, the reversible time evolution
of an arbitrary functional $\mathcal{A}(B_k,D_k,\psi,\phi) = \int A
\mathrm{d}\x$ is given as 
\begin{equation}\label{eqn.time.evol.Ham}
	\frac{\pd \mathcal{A}}{\pd t} = \left\{ \mathcal{A},\en \right\}^{(B_k,D_k,\psi,\phi)},
\end{equation}
with $\en$ being the energy of the system.
Therefore, after noting that the left
hand-side of \eqref{eqn.time.evol.Ham} can be also formally written as
\begin{equation}\label{eqn.time.evol.sum}
	\frac{\pd \mathcal{A}}{\pd t} = \int 
	\left(	
	\frac{\delta \mathcal{A}}{\delta B_k} \frac{\pd B_k}{\pd t} + 
	\frac{\delta \mathcal{A}}{\delta \phi} \frac{\pd \phi}{\pd t} + 
	\frac{\delta \mathcal{A}}{\delta D_k} \frac{\pd D_k}{\pd t} + 
	\frac{\delta \mathcal{A}}{\delta \psi} \frac{\pd \psi}{\pd t} 
	\right)
	\mathrm{d} \x\ ,
\end{equation}
and due to the arbitrariness of the functional $\mathcal{A}$, one can directly
read the expressions for the time evolutions of the corresponding variables in
the parentheses in \eqref{eqn.PB.time.evol}, e.g. $\frac{\pd B_k}{\pd t} =
-\varepsilon_{kij} \pd_i \en_{D_j} - \pd_k \en_\phi$, which is exactly
\eqref{eq:SHTC.B}, etc.
Recall that the energy conservation law \eqref{eqn.energy.nonlinear} can be
trivially obtained in the Hamiltonian formalism as
\begin{equation}
	\frac{\pd \en}{\pd t} = \left\{ \en,\en \right\}^{(B_k,D_k,\psi,\phi)} = 0,
\end{equation}
and \eqref{eqn.time.evol.sum} is another manifestation of how the energy
conservation laws \eqref{eqn.energy} and \eqref{eqn.energy.nonlinear} were
derived.
Finally, we remark that if one takes the pairs $(A_k,D_k)$ and $(Z,\phi)$ as the
canonical variables, and hence admits that the functionals may depend on their spatial gradients then there exists a canonical Poisson bracket $\left\{ \mathcal{A},\mathcal{B} \right\}^{(A_k,D_k,Z,\phi)}$
\begin{equation}\label{eqn:P.bracket}
	\left\{ \mathcal{A},\mathcal{B} \right\}^{(A_k,D_k,Z,\phi)} = \int 
	\Big(
	\left( \mathcal{A}_{D_k} \mathcal{B}_{A_k} - \mathcal{A}_{A_k} \mathcal{B}_{D_k} \right) 
	+
	\left( \mathcal{A}_{\phi} \mathcal{B}_{Z} - \mathcal{A}_{Z} \mathcal{B}_{\phi} \right)
	\Big)
	\mathrm{d}\x,
\end{equation}
where (keeping in mind definitions \eqref{eqn.definitions1},
\eqref{eqn.definitions2} and assuming the functionals do not depend explicitly
on $A_k$ and $Z$), the functional derivatives w.r.t. the variables $A_k$ and $Z$
read
\begin{align}
	\mathcal{A}_{A_k} &= \frac{\delta \mathcal{A}}{\delta A_k} = \frac{\partial \mathcal{A}}{\partial A_k} - \pd_i \left( \frac{\partial \mathcal{A}}{\partial A_{k,i}} \right) 
	= \varepsilon_{kij} \pd_i \mathcal{A}_{B_j} - \pd_k \mathcal{A}_{\psi}, \\
	\mathcal{A}_{Z} &= \frac{\delta \mathcal{A}}{\delta Z} = \frac{\partial \mathcal{A}}{\partial Z} - \pd_i \left( \frac{\partial \mathcal{A}}{\partial Z_i} \right) = -\pd_i \mathcal{A}_{B_i}.
\end{align}
Using these formulas, one can transform the bracket
$\{\mathcal{A},\mathcal{B}\}^{(A_k,D_k,Z,\phi)}$ to the bracket
$\{\mathcal{A},\mathcal{B}\}^{(B_k,D_k,\psi,\phi)}$. This transformation,
however, is not canonical. Of course, the time evolution equations corresponding
to $\{\mathcal{A},\mathcal{B}\}^{(A_k,D_k,Z,\phi)}$ are second-order PDEs on
$(A_k,D_k,Z,\phi)$, which is not within our interests.

%======================================================================================
\section{Extension  to special relativity}
\label{sec.sr}

In this section we use the Einstein summation convention of repeated indices.
Greek indices run from $0$ to $3$, Latin indices run from $1$ to $3$. 
Moreover, we set $c_0 = c_h = 1$ and make use of the Lorentz-Heaviside notation, 
which is equivalent to the Gauss notation where all $4\pi$ factors in the original Maxwell equations disappear.
In the following we present the extension to special relativity of the equations discussed so far. 
The spacetime is therefore a flat Minkowski one, and in Cartesian coordinates it is simply
\begin{equation}
	ds^2 = g_{\mu\nu}dx^\mu dx^\nu = -dt^2 + dx^2 + dy^2 + dz^2\,.
\end{equation}
It is convenient to introduce the so-called laboratory observer, defined by the four velocity
\begin{equation}
	n^\mu=(1,0,0,0), \hspace{2cm}n_\mu=(-1,0,0,0)\,.
\end{equation}
We now briefly recall the usual relativistic tensor formalism of the Maxwell equations in special relativity, before
showing the necessary changes to obtain its GLM version. Even if we are assuming a flat spacetime in Cartesian coordinates,
with Christoffel symbols that are identically zero, we nevertheless write the equations by adopting covariant derivatives,
in view of possible future extensions to intrinsically curved spacetimes.
At the end of this Section we reach the conclusion that the system \eqref{eqn.B}-\eqref{eqn.psi}
is already  consistent with special relativity. However, this must be 
shown through a proper discussion.

%----------------------------------------------------------------------------
\subsection{Pure Maxwell} 

We write the Maxwell tensor $F^{\mu\nu}$ and its dual, the Faraday tensor $\FD^{\mu\nu}=\frac{1}{2}\epsilon^{\mu\nu\alpha\beta}F_{\alpha\beta}$, as
\begin{eqnarray}
F^{\mu\nu} & = & n^{\,\mu}E^{\nu} - E^{\mu}n^{\nu} + \epsilon^{\,\mu\nu\lambda\kappa}B_{\lambda}n_{\kappa}, 
\label{eq:F} \\
\FD^{\mu\nu} & = & n^{\,\mu}B^{\nu} - B^{\mu}n^{\nu}  - \epsilon^{\,\mu\nu\lambda\kappa}E_{\lambda}n_{\kappa}\,,
\label{eq:F*}
\end{eqnarray}
where $\epsilon^{\,\mu\nu\lambda\kappa}$ is the four dimensional Levi-Civita tensor.
We recall that  $E^\mu$ and  $B^\mu$, are both spatial four-vectors, with zero temporal components, and they express
the electric field and the magnetic field as measured by the laboratory observer.
The Maxwell tensor can also be written in terms of the four-vector potential $A_\mu$ as
\begin{equation}
	\label{F-vectorA}
	F_{\mu\nu} = 2 \nabla_{[\mu} A_{\nu]} = \nabla_{\mu} A_{\nu} - \nabla_{\nu} A_{\mu} = \partial_{\mu} A_{\nu} - \partial_{\nu} A_{\mu}\,.
\end{equation}
The first couple of Maxwell equations is a purely formal consequence of the definitions introduced so far. In fact, if we contract \eqref{F-vectorA} with the Levi-Civita, we get
\begin{equation}
	\label{Pure-Maxwell-first-couple}
	\epsilon^{\mu\nu\alpha\beta}F_{\mu\nu}=2\epsilon^{\mu\nu\alpha\beta}\nabla_{[\mu} A_{\nu]} \hspace{0.3cm} 
\Longrightarrow	\hspace{0.3cm} \FD^{\alpha\beta}=\epsilon^{\mu\nu\alpha\beta}\nabla_{\mu} A_{\nu}
\hspace{0.3cm}\Longrightarrow	\hspace{0.3cm}\nabla_\beta \FD^{\alpha\beta}=0\,.
\end{equation}
It is easy to show (see  \cite{Landau:1975pou}) that the four equations expressed by \eqref{Pure-Maxwell-first-couple}
correspond to the standard divergence-free condition for the magnetic field
plus the evolution of the magnetic field.
The Lagrangian density must be a relativistic invariant, and it is given by 
\begin{equation}
	\Lambda=- \quart F_{\mu\nu} F^{\mu\nu} = \halb (E^2-B^2)\,.
\end{equation}
The second couple of Maxwell equations is obtained as an Euler Lagrange equation with respect to the vector potential $A_\mu$. Namely:
\begin{equation}
	\partial_\mu\left(\frac{\partial\Lambda}{\partial (\partial_\mu A_\nu)}\right)-\frac{\partial\Lambda}{\partial A_\nu}=0\,.
\end{equation}
After a few calculations this provides
\begin{eqnarray}
	\label{MaxGLM-rel-2}
	\nabla_\nu F^{\mu\nu}  =0\,, 
\end{eqnarray}
which corresponds to the standard divergence-free condition for the electric field
in vacuum, plus the evolution of the electric field.
%

%================================================================================
\subsection{The relativistic Maxwell-GLM}
In the new framework, the Maxwell and the Faraday tensors are modified as
\begin{eqnarray}
	F_{\mu\nu}  &=& \nabla_{\mu} A_{\nu} -  \nabla_{\nu} A_{\mu} = n_{\,\mu}E_{\nu} - E_{\mu}n_{\nu} + \epsilon_{\,\mu\nu\lambda\kappa}B^{\lambda}n^{\kappa}+\epsilon_{\,\mu\nu\alpha\beta}n^\alpha
	g^{\beta \gamma} \nabla_\gamma Z, \\
	\FD_{\mu\nu}  &=& \frac{1}{2}\epsilon_{\,\mu\nu\alpha\beta}F^{\alpha\beta} = n_\mu B_\nu - n_\nu B_\mu + {\epsilon_{\,\mu\nu\alpha\beta}}n^\alpha E^\beta - \left( n_{\mu}\nabla_{\nu}Z - n_{\nu}\nabla_{\mu}Z \right) \,,
\end{eqnarray}
from which it follows that the purely spatial magnetic and electric fields can be obtained as
\begin{eqnarray}
	\label{newB}
	B_{i}  &=& \FD_{i\nu}n^\nu + \nabla_i Z = (\nabla \times \A)_i + \nabla_i Z, \\
	\label{newE}
	E_{i}  &=& F_{i\nu}n^\nu\,. 
\end{eqnarray}
We stress that, according to \eqref{newB}-\eqref{newE}, only the magnetic field
is affected by the extra terms in the electromagnetic tensors. The modified
Lagrangian density is 
\begin{eqnarray}
	\Lambda&=&- \quart F_{\mu\nu} F^{\mu\nu} - \frac{1}{2} (\nabla_\mu A^\mu)^2 - \frac{1}{2}g^{\mu\nu}\nabla_\mu Z\, \nabla_\nu Z\nonumber\\
	&=& \halb (E^2-(\nabla \times \A)^2) -  \frac{1}{2}(\nabla_\mu A^\mu)^2 - \frac{1}{2}g^{\mu\nu}\nabla_\mu Z\, \nabla_\nu Z\,.
\end{eqnarray}
\subsubsection{First couple of Maxwell-GLM}

The first couple of Maxwell equations is obtained as the sum of two formal identities plus one
Euler Lagrange equation. The first formal identity is 
\begin{equation}
	\label{formal-1}
	\nabla_\nu \FD^{\mu\nu} =0\,.
\end{equation}
The second formal identity is
\begin{eqnarray}
\label{formal-2}
\nabla_\nu ( n^\nu\nabla^\mu Z + g^{\mu\nu}\phi) =0 \,,
\end{eqnarray}
which follows directly from the definition $\phi=-\partial_t Z$ and the fact that
$\nabla_\mu n^\mu=0$.
Finally, we need the  Euler Lagrange equation with respect to the field $Z$. Namely:
\begin{equation}
	\partial_\mu\left(\frac{\partial\Lambda}{\partial (\partial_\mu Z)}\right)-\frac{\partial\Lambda}{\partial Z}=0\,.
\end{equation}
from which we obtain a wave equation for $Z$:
\begin{equation}
	\label{eq.wave.Z}
	g^{\alpha\beta}\nabla_\alpha \nabla_\beta Z=0\,.
\end{equation}	
Now, by subtracting \eqref{formal-2} from \eqref{formal-2}, we obtain	
\begin{eqnarray}
	\label{MaxGLM-rel-1}
	\nabla_\nu (\FD^{\mu\nu} - n^\nu\nabla^\mu Z   - g^{\mu\nu}\phi) =0 \,.
\end{eqnarray}
It is easy to show that 	\eqref{MaxGLM-rel-1}
is the  covariant relativistic version of 
\eqref{eqn.B}-\eqref{eqn.phi}, where the wave equation \eqref{eq.wave.Z} must be used.
In fact, by first considering the temporal component of \eqref{MaxGLM-rel-1}, we can find
\begin{eqnarray}
&&	\partial_i \FD^{t i} - \partial_\nu(n^\nu\partial^t Z) - g^{t\nu}\partial_\nu\phi=0\nonumber\\
\Longrightarrow&&\partial_i(n^t B^i - n^t\partial^i Z) +\partial_t^2 Z + \partial_t\phi=0\nonumber\\
\Longrightarrow&&\partial_i B^i - \partial_i\partial^i Z  +\partial_t^2 Z + \partial_t\phi=0  \hspace{1cm}\textrm{use \eqref{eq.wave.Z}}\nonumber   \\
\Longrightarrow&&\partial_i B^i  + \partial_t\phi=0\,,
\end{eqnarray}
which is \eqref{eqn.phi} with $c_h=1$.
On the other hand, by considering the spatial component of \eqref{MaxGLM-rel-1}, we obtain
\begin{eqnarray}
	&&	\partial_\nu \FD^{i\nu} + \partial_\nu(n^\nu\partial^i Z) - g^{ij}\partial_j\phi=0\nonumber\\
	\Longrightarrow&&\partial_t(-n^t B^i + n^t\partial^i Z) +
\partial_j(\epsilon^{ij\alpha\beta} n_\alpha E_\beta ) -	\partial_t(n^t  \partial^i Z) - g^{ij}\partial_j\phi=0\nonumber\\
	\Longrightarrow&&-\partial_t B^i + \partial_t\partial^i Z -\epsilon^{ijk}\partial_j E_k  -\partial_t\partial^i Z - g^{ij}\partial_j\phi=0  \nonumber   \\
	\Longrightarrow&&\partial_t B^i +\epsilon^{ijk}\partial_j E_k + g^{ij}\partial_j\phi=0\,,
\end{eqnarray}
which is \eqref{eqn.B} with $c_0=c_h=1$.

\subsubsection{Second couple of Maxwell-GLM}
The second couple of Maxwell equations is obtained as an Euler Lagrange equation with respect to the fields $A_\mu$. Namely:
\begin{equation}
	\partial_\mu\left(\frac{\partial\Lambda}{\partial (\partial_\mu A_\nu)}\right)-\frac{\partial\Lambda}{\partial A_\nu}=0\,.
\end{equation}
After a few calculations this provides
\begin{eqnarray}
	\label{MaxGLM-rel-2}
	\nabla_\nu (F^{\mu\nu} - g^{\mu\nu}\psi) =0\,, 
\end{eqnarray}
which is the 
covariant relativistic version of 
\eqref{eqn.E}-\eqref{eqn.psi}, where $\psi= \nabla_\mu A^\mu$. In fact, the temporal component of 
\eqref{MaxGLM-rel-2} provides
\begin{eqnarray}
	&&	\partial_\nu F^{t\nu}  - g^{t\nu}\partial_\nu\psi=0\nonumber\\
	\Longrightarrow&&\partial_i F^{ti}  - g^{tt}\partial_t\psi=0\nonumber\\
	\Longrightarrow&&\partial_i E^i + \partial_t \psi=0\,,
\end{eqnarray}
which is \eqref{eqn.psi} with $c_h=1$. Finally, the spatial component of 
\eqref{MaxGLM-rel-2} gives 
\begin{eqnarray}
	&&	\partial_\nu F^{i\nu}  - g^{i\nu}\partial_\nu\psi=0\nonumber\\
%	\Longrightarrow&&\partial_t F^{it} +\partial_j F^{ij} - g^{ij}\partial_j\psi=0\nonumber\\
	\Longrightarrow&&\partial_t(-n^t E^i ) +
\partial_j(\epsilon^{ij\alpha\beta} B_\alpha n_\beta +\epsilon^{ij\alpha\beta} n_\alpha \partial_\beta Z) -g^{ij}\partial_j\psi=0\nonumber\\
	\Longrightarrow&&-\partial_t E^i  +
\epsilon^{ijk}\partial_j  B_k -\epsilon^{ijk} \partial_j\partial_k  Z
 -g^{ij}\partial_j\psi=0\nonumber\\
	\Longrightarrow&&\partial_t E^i  -
\epsilon^{ijk}\partial_j  B_k +g^{ij}\partial_j\psi=0\,,
\end{eqnarray}
which is \eqref{eqn.E} with $c_0=c_h=1$.

%================================================================
%\clearpage 
\section{Compatible numerical discretizations}
\label{sec.numerics}

\subsection{Exactly energy-conserving semi-discrete numerical schemes on collocated meshes}
\label{sec.num.htc}

Throughout this section we will employ the Einstein summation convention and use the following compact notation for the hyperbolic PDE system \eqref{eqn.B}-\eqref{eqn.psi}: 
\begin{equation}
	\partial_t \q + \partial_k \mathbf{f}_k(\q) = 0, 	 
	\label{eqn.pde} 
\end{equation}
where $\q = \left(  \B, \phi,\E, \psi \right)$ is the state vector and $\f_k$ is the flux tensor. Thanks to the compatibility of \eqref{eqn.pde} with the extra conservation law for the total energy \eqref{eqn.energy}, which 
can be written more compactly with the total energy flux $F_k$ according to \eqref{eqn.energy} as 
\begin{equation}
	\partial_t \mathcal{E} + \partial_k F_k = 0, 
	\label{eqn.pde.extra} 
\end{equation}
the following identity holds at the continuous level:
\begin{equation}
	\p \cdot \partial_k \mathbf{f}_k(\q) = \partial_k F_k.   
	\label{eqn.flux.comp} 
\end{equation}
Based on the ideas outlined in \cite{HTCGPR,HTCMHD,HTCAbgrall,HTCAbgrall2,HTCTwoFluid} in the following we show how to achieve this compatibility property exactly also at the discrete level. 
The computational domain $\Omega \subset \mathbb{R}^d$ in $d$ space dimensions could in principle be paved by a fairly general mesh of \textit{orthogonal} polygonal / polyhedral control volumes denoted by $\Omega^{\ell}$ in the following. However, in this paper we limit ourselves to the simple Cartesian case. The common edge / face of two neighboring polygons / polyhedra $\Omega^{\ell}$ and $\Omega^{\err}$ is $\partial \Omega^{\ell \err} = \Omega^{\ell} \cap \Omega^{\err}$ and $\mathcal{N}_\ell$ is the set of neighbors of the element $\Omega^{\ell}$. 
The compatible semi-discrete finite volume scheme for the discretization of the hyperbolic system of conservation laws \eqref{eqn.pde} with extra conservation law \eqref{eqn.pde.extra}, \eqref{eqn.flux.comp} reads for each control volume $\Omega^{\ell}$ as follows: 
\begin{equation}
	\frac{\partial \q^{\ell}}{\partial t}  = -  \sum_{\Omega^\err \in \mathcal{N}_{\ell}}
	\frac{\left|\partial\Omega^{\ell\err}\right|}{\left|\Omega^{\ell}\right|}  \f^{\ell\err}. 
	\label{eqn.flux2dfv}
\end{equation}	
Throughout this paper the scheme \eqref{eqn.flux2dfv} is integrated in time via a high order Runge-Kutta method of suitable order greater or equal than four.
The compatible numerical flux in normal direction is denoted by $\f^{\ell\err}$ and must satisfy the following discrete compatibility condition, which is a discrete analogy to the continuous identity \eqref{eqn.flux.comp}:    
\begin{equation}
	\p^{\ell} \cdot \left( \f^{\ell\err} - \f_k^{\ell} n_k^{\ell\err} \right) +
	\p^{\err} \cdot \left( \f_k^{\err} n_k^{\ell\err} - \f^{\ell\err} \right) =  \left(F_k^{\err}  - F_k^{\ell} \right) n_k^{\ell\err}  ,\label{eqn.compatibility_2dFV}
\end{equation} 
with $\mathbf{n}^{\ell \err} = \left\{ n_k^{\ell \err} \right\}$ the unit normal vector pointing from element $\Omega^{\ell}$ to its neighbor $\Omega^{\err}$. 
Therefore, the following identity is obviously true: $\mathbf{n}^{\err\ell} = -\mathbf{n}^{\ell \err}$.
If a numerical flux $\f^{\ell\err}$ satisfies \eqref{eqn.compatibility_2dFV} then it is easy to prove total energy conservation. We thus have the following 
\begin{theorem}[Total energy conservation]
	\label{thm.energy.htc} 
	The finite volume scheme \eqref{eqn.flux2dfv} with a numerical flux $\f^{\ell\err}$ that satisfies \eqref{eqn.compatibility_2dFV} conserves total energy in the sense that, for vanishing boundary fluxes, we have 
	\begin{equation}
		\int \limits_{\Omega} \frac{\partial \mathcal{E}}{\partial t} \mathrm{d}\x= 0.
	\end{equation}
\end{theorem}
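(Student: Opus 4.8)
The plan is to evaluate the global discrete energy rate $\int_\Omega \partial_t \mathcal{E}\,\mathrm{d}\x = \sum_\ell |\Omega^\ell|\,\partial_t \mathcal{E}^\ell$ directly from the scheme and show that it collapses to boundary contributions only. First I would use the chain rule together with the definition of the main field $\p = \partial\en/\partial\q$ to write the per-cell energy rate as $\partial_t \mathcal{E}^\ell = \p^\ell\cdot\partial_t\q^\ell$, and then substitute the update \eqref{eqn.flux2dfv}. This gives
\[
	\int_\Omega \frac{\partial\mathcal{E}}{\partial t}\,\mathrm{d}\x = -\sum_\ell\sum_{\Omega^\err\in\mathcal{N}_\ell}\left|\partial\Omega^{\ell\err}\right|\,\p^\ell\cdot\f^{\ell\err},
\]
so that the whole statement reduces to controlling this double sum over cell--face pairs.

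Next I would reorganize the double sum as a sum over distinct interfaces, pairing the contribution that $\Omega^\ell$ receives through $\partial\Omega^{\ell\err}$ with the one that its neighbor $\Omega^\err$ receives through the same face. Using the conservative consistency of the numerical flux, $\f^{\err\ell}=-\f^{\ell\err}$ (which accompanies the geometric relation $\mathbf{n}^{\err\ell}=-\mathbf{n}^{\ell\err}$ already noted in the text), each interface then contributes $\left|\partial\Omega^{\ell\err}\right|\,(\p^\ell-\p^\err)\cdot\f^{\ell\err}$. At this point the discrete compatibility condition \eqref{eqn.compatibility_2dFV} is exactly what is needed: rearranging it yields $(\p^\ell-\p^\err)\cdot\f^{\ell\err} = (P_k^\err - P_k^\ell)\,n_k^{\ell\err}$, with the purely cell-centered potential flux $P_k^\ell := F_k^\ell - \p^\ell\cdot\f_k^\ell$. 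The decisive gain is that the numerical flux $\f^{\ell\err}$ has disappeared, being replaced by a difference of quantities attached to the two adjacent cells.

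Finally I would return to the ordered cell--neighbor sum and split it into the two pieces $\sum_{\ell,\err}\left|\partial\Omega^{\ell\err}\right| P_k^\err n_k^{\ell\err}$ and $-\sum_{\ell,\err}\left|\partial\Omega^{\ell\err}\right| P_k^\ell n_k^{\ell\err}$. In the second piece $P_k^\ell$ factors out of the inner sum over neighbors, which is annihilated by the geometric closure identity $\sum_{\Omega^\err\in\mathcal{N}_\ell}\left|\partial\Omega^{\ell\err}\right|\,n_k^{\ell\err}=0$ valid for any closed control volume; the first piece, after relabeling $\ell\leftrightarrow\err$ and using $n_k^{\err\ell}=-n_k^{\ell\err}$, acquires the same structure and vanishes by the same identity. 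Hence both pieces vanish up to contributions carried by faces lying on $\partial\Omega$, which are precisely the boundary fluxes assumed to vanish, and therefore $\int_\Omega \partial_t\mathcal{E}\,\mathrm{d}\x = 0$.

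I expect the main obstacle to be the bookkeeping between the two summation conventions --- the unordered interface sum natural for applying \eqref{eqn.compatibility_2dFV} versus the ordered cell--neighbor sum natural for the telescoping --- and in particular keeping the normal-vector signs and the combinatorial factor relating the two mutually consistent. The compatibility condition and the flux antisymmetry are the two structural inputs that make the numerical flux drop out; once it does, the only genuine content left is the discrete Gauss/closure identity for a closed polytope, and the vanishing-boundary-flux hypothesis is exactly what disposes of the remaining surface terms.
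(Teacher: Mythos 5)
Your proposal is correct and follows essentially the same route as the paper's proof: multiply the scheme by the main field $\p^{\ell}$, use the discrete compatibility condition \eqref{eqn.compatibility_2dFV} to trade the pairing of $\p$ with the numerical flux for differences of cell-centered quantities, and annihilate the result with the closure identity \eqref{eqn.closedsurface2d} plus the vanishing boundary fluxes. The only organizational difference is that the paper symmetrizes per cell and exhibits an explicit antisymmetric numerical total-energy flux $F^{\ell\err}$ (hence a discrete \emph{local} conservation law) before telescoping, whereas you pair per interface and eliminate $\f^{\ell\err}$ immediately in favor of the potential flux $F_k-\p\cdot\f_k$, which yields global conservation directly but relies on the flux antisymmetry $\f^{\err\ell}=-\f^{\ell\err}$ stated explicitly (an assumption the paper also uses, albeit implicitly, in its final telescoping step).
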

\begin{proof}
	We take the dot product of the semi-discrete scheme  \eqref{eqn.flux2dfv} with the discrete main field variables $\p^{\ell}$, multiply by the cell volume $\left|\Omega^{\ell}\right|$ and obtain: 
	\begin{eqnarray*}
		\left|\Omega^{\ell}\right| \p^{\ell}\cdot	\frac{\partial \q^{\ell}}{\partial t} = - \sum_{\Omega^\err \in \mathcal{N}_{\ell}}
		\left|\partial\Omega^{\ell\err}\right|  \p^{\ell} \cdot \f^{\ell\err}.
	\end{eqnarray*}
	Adding and subtracting $\displaystyle\sum_{\Omega^\err \in \mathcal{N}_{\ell}}\left|\partial\Omega^{\ell\err}\right|\halb \p^{\err} \cdot \f^{\ell\err}$ leads to 
	\begin{equation*}
		\left|\Omega^{\ell}\right| \frac{\partial \mathcal{E}^{\ell}}{\partial t} = 
		-  \sum_{\Omega^\err \in \mathcal{N}_{\ell}}
		\left|\partial\Omega^{\ell\err}\right| \halb \left( \left(  \p^{\ell}+\p^{\err}\right) \cdot \f^{\ell\err}   	
		 + \left(  \p^{\ell}-\p^{\err}\right) \cdot \f^{\ell\err}   \right).
	\end{equation*}
	Using the compatibility condition \eqref{eqn.compatibility_2dFV} leads to  
	\begin{equation*}
		\left|\Omega^{\ell}\right| \frac{\partial \mathcal{E}^{\ell}}{\partial t} = 
		-  \sum_{\Omega^\err \in \mathcal{N}_{\ell}}
		\left|\partial\Omega^{\ell\err}\right| \halb \left(  \left(F_k^{\err}  - F_k^{\ell} \right) n_k^{\ell \err} -		
		\left(\p^{\err}\cdot \f^{\err}_{k} - \p^{\ell}\cdot \f^{\ell}_{k} \right) n_k^{\ell \err}
		+  \left(  \p^{\ell}+\p^{\err}\right) \cdot 
		\f^{\ell\err} \right).
	\end{equation*}
	Since the integral of the normal vector over a closed surface vanishes, the following identity holds: 
	\begin{equation} 
		\sum_{\Omega^\err \in \mathcal{N}_{\ell}} \left|\partial\Omega^{\ell\err}\right| \mathbf{n}^{\ell \err} = 0.
		\label{eqn.closedsurface2d} 
	\end{equation}
	Adding $\p^{\ell}\cdot\f^{\ell}_k + F_k^\ell$ multiplied by \eqref{eqn.closedsurface2d} one obtains 
	\begin{eqnarray*}
		\left|\Omega^{\ell}\right| \frac{\partial \mathcal{E}^{\ell}}{\partial t} &=& 
		- \sum_{\Omega^\err \in \mathcal{N}_{\ell}}
		\left|\partial\Omega^{\ell\err}\right| \halb \left( \left(F_k^{\err}  + F_k^{\ell} \right) n_k^{\ell \err}		
		- \left(\p^{\err}\cdot \f^{\err}_{k}+ \p^{\ell}\cdot \f^{\ell}_{k}\right)n_k^{\ell \err} 
		+ \left(  \p^{\ell}+\p^{\err}\right) \cdot\f^{\ell\err} \right) \nonumber \\ 
		&=& 
		-\sum_{\Omega^\err \in \mathcal{N}_{\ell}} \left|\partial\Omega^{\ell\err}\right| F^{\ell\err},
	\end{eqnarray*}
	with the numerical total energy flux in the normal direction $\normal^{\ell\err}$ given by 
	\begin{equation} 
		F^{\ell\err} = \halb \left( \left(F_k^{\err}  + F_k^{\ell} \right) n_k^{\ell \err}		
		- \left(\p^{\err}\cdot \f^{\err}_{k}+ \p^{\ell}\cdot \f^{\ell}_{k}\right)n_k^{\ell \err} 
		+ \left(  \p^{\err} + \p^{\ell} \right) \cdot\f^{\ell\err} \right).
	\end{equation} 	
	The total energy conservation is then finally obtained by summing up over all elements $\Omega^{\ell}$, assuming that the fluxes are zero at the boundary of the domain and by employing the telescopic sum property, since the sum of all numerical total energy fluxes $F^{\ell\err}$ at the internal interfaces cancels 
	\begin{equation*}
		\int \limits_{\Omega} \frac{\partial \mathcal{E}}{\partial t} \mathrm{d}\x= \sum_{\ell}  \left|\Omega^{\ell}\right| \frac{\partial \mathcal{E}^{\ell}}{\partial t}= 0. 
	\end{equation*}
\end{proof}

\paragraph{Compatible Abgrall-type scheme}

As in \cite{Abgrall2018,HTCAbgrall,HTCAbgrall2,HTCTwoFluid} the \textit{thermodynamically compatible} \textbf{Abgrall flux} reads  
\begin{equation}
	\f^{\ell\err} = 
	 \halb \left( \mathbf{f}_k^{\ell} + \mathbf{f}_k^{\err} \right) n_k^{\ell \err}  
	 - \alpha^{\ell \err}  	 \left( \p^{\err} - \p^{\ell} \right), 	
	\label{eqn.abgrallflux} 
\end{equation}
with the scalar correction factor $\alpha^{\ell \err}$ that is directly obtained by imposing the discrete compatibility condition \eqref{eqn.compatibility_2dFV} on the numerical flux and which is given by  
\begin{equation}
	\alpha^{\ell \err} =  \frac{ \left( F^{\err}_k - F^{\ell}_k \right) n_k^{\ell \err} + 
	    \halb \left(  \p^{\err} + \p^{\ell}  \right) \cdot   \left( \mathbf{f}_k^{\ell} - \mathbf{f}_k^{\err} \right) n_k^{\ell \err}  }{ \left(  \p^{\err} - \p^{\ell}  \right)^2 }.  
	\label{eqn.alpha2d}  		
\end{equation}	
One can easily verify that the Abgrall flux \eqref{eqn.abgrallflux} with the correction factor \eqref{eqn.alpha2d} satisfies the discrete compatibility condition \eqref{eqn.compatibility_2dFV} \textit{by construction}, since the correction factor $\alpha^{\ell \err}$ is obtained by imposing \eqref{eqn.compatibility_2dFV} on \eqref{eqn.abgrallflux}.

\subsection{Fully-discrete structure-preserving staggered semi-implicit scheme}
\label{sec.num.si}
In this section we present a completely different compatible discretization compared to the previous one. The method introduced in this section is fully-discrete, globally exactly energy-conserving and uses a \textit{staggered mesh} with \textit{mimetic} discrete differential operators that preserve the essential vector calculus identities exactly at the discrete level. The computational domain is denoted by $\Omega$. Furthermore, we denote the control volumes of the primary mesh by $\Omega_c$ with cell centers $\x_c$ and vertices $\x_p$, while the control volumes of the dual mesh are denoted by $\Omega_p$, with cell centers $\x_p$ (see Fig.\ref{fig:Grid}.)    

\begin{figure}[!h]
	\centering
\includegraphics[width=0.45\textwidth]{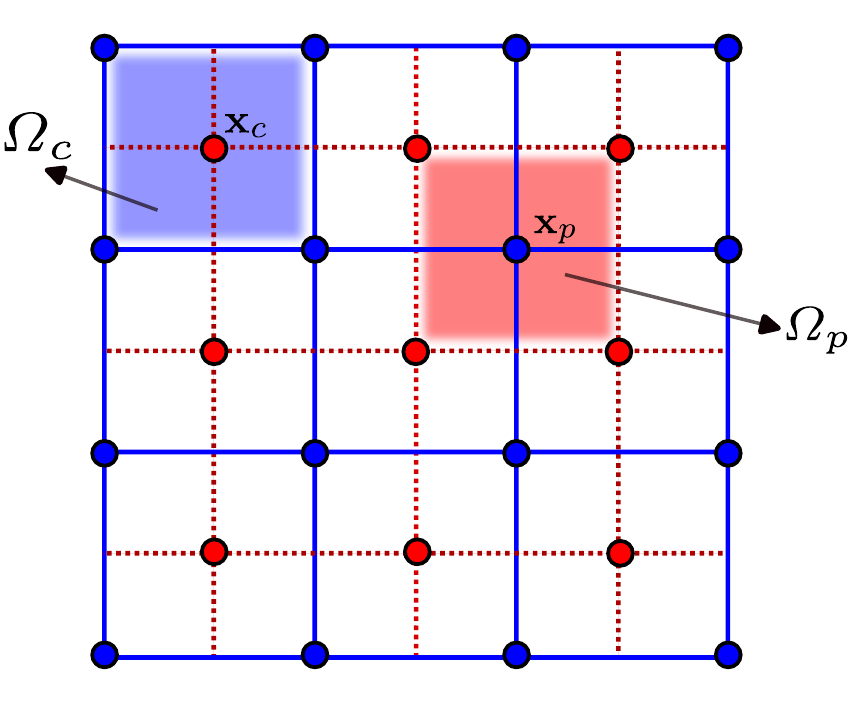}
	\caption{Schematic representation of the primary $\Omega_c$ and of the dual $\Omega_p$ mesh.}
	\label{fig:Grid}
\end{figure}

In the following we introduce two mimetic discrete nabla operators, see \cite{HymanShashkov1997,Maire2007,Maire2008,Maire2009}: 
\begin{equation}
	\nabla_c^p = \frac{1}{|\Omega_c|} \sum_{p \in \Omega_c} l_{pc} \mathbf{n}_{pc} \,,
	\label{eqn.nablacp}  
\end{equation} 	
and its dual
\begin{equation}
	\nabla_p^c = \frac{1}{|\Omega_p|} \sum_{c \in \Omega_p} l_{pc} \mathbf{n}_{cp},
	\label{eqn.nablapc}  
\end{equation} 	
with $\mathbf{n}_{cp} = -\mathbf{n}_{pc}$. Note that in \eqref{eqn.nablacp} and \eqref{eqn.nablapc} the discrete nabla operators have not been applied to any field yet. Their concrete application to scalar and discrete vector fields will be introduced below.  
Due to the Gauss theorem the following identities are obviously true: 
\begin{equation}
	\sum_{p \in \Omega_c} l_{pc} \mathbf{n}_{pc} = 0, \qquad 
	\sum_{c \in \Omega_p} l_{pc} \mathbf{n}_{cp} = 0. 
	\label{eqn.gauss.disc} 
\end{equation} 

For scalar fields $\phi_p=\phi(\x_p)$ and vector fields $\A_p=\A(\x_p)$ defined on the vertices of the primal grid and for scalar and vector fields defined on the centers of the primal grid (the vertices of the dual grid)  $\phi_c=\phi(\x_c)$ and $\A_c=\A(\x_c)$ the following discrete gradient, divergence and curl operators and their duals are naturally defined via \eqref{eqn.nablacp} and \eqref{eqn.nablapc}: 
\begin{alignat}{2}
	&\nabla_c^p \phi_p   = \frac{1}{|\Omega_c|} \sum_{p \in \Omega_c} l_{pc} \mathbf{n}_{pc} \,     \phi_p,  \qquad 
	&&\nabla_p^c \phi_c      = \frac{1}{|\Omega_p|} \sum_{c \in \Omega_p} l_{pc} \mathbf{n}_{cp} \,     \phi_c, 
	\\  	
	&\nabla_c^p \cdot  \A_p = \frac{1}{|\Omega_c|} \sum_{p \in \Omega_c} l_{pc} \mathbf{n}_{pc} \cdot  \A_p,  \qquad
	&&\nabla_p^c \cdot  \A_c = \frac{1}{|\Omega_p|} \sum_{c \in \Omega_p} l_{pc} \mathbf{n}_{cp} \cdot  \A_c, 
	\\ 
	&\nabla_c^p \times \A_p = \frac{1}{|\Omega_c|} \sum_{p \in \Omega_c} l_{pc} \mathbf{n}_{pc} \times \A_p,  \qquad
	&&\nabla_p^c \times \A_c = \frac{1}{|\Omega_p|} \sum_{c \in \Omega_p} l_{pc} \mathbf{n}_{cp} \times \A_c. 
\end{alignat} 
Furthermore, after some calculations one obtains the following discrete vector calculus identities, which are the fundamental pillar of the proposed numerical scheme: 
The discrete curl applied to the discrete gradient of a discrete scalar field $\phi$ vanishes identically, i.e. 
\begin{equation}
	\nabla_c^p \times \nabla_p^c \, \phi_c = 0, \qquad 
	\nabla_p^c \times \nabla_c^p \, \phi_p = 0,  
	\label{eqn.rot.grad} 
\end{equation}
which is the discrete analogue of the continuous identity $\nabla \times \nabla \phi = 0$. Furthermore, also the discrete divergence applied to the discrete curl of a vector field $\A$ vanishes: 
\begin{equation}
	\nabla_c^p \cdot \nabla_p^c \times \A_c = 0, \qquad 
	\nabla_p^c \cdot \nabla_c^p \times \A_p = 0,  
	\label{eqn.div.rot} 
\end{equation}
which reflects the continuous identity $\nabla \cdot \nabla \times \mathbf{A} = 0 $ at the discrete level. The extended calculations are shown in the Appendix.

The discrete magnetic field and the discrete cleaning scalar $\psi$ are governed
by the differential identities \eqref{eqn.B2}, \eqref{eqn.psi2} and are located
at the cell centers of the primal mesh and are denoted by $\B_c^n=\B(\x_c,t^n)$
and $\psi^n_c=\psi(\x_c,t^n)$, respectively, while the discrete electric field and
the discrete cleaning scalar $\phi$ are governed by the Euler-Lagrange equations \eqref{eqn.phi2}, \eqref{eqn.E2} and are located in the vertices of the primal
mesh, i.e. the cell centers of the dual mesh, and are denoted by
$\E_p^n=\E(\x_p,t^n)$ and $\phi^n_p=\phi(\x_p,t^n)$, respectively. Furthermore, we
use the notations 
$$\B_c^{n+\halb} = \halb (\B_c^{n}+\B_c^{n+1}), \qquad \E_p^{n+\halb} = \halb (\E_p^{n}+\E_p^{n+1}),$$ 
$$\psi_c^{n+\halb} = \halb (\psi_c^{n}+\psi_c^{n+1}), \qquad \phi_p^{n+\halb} = \halb (\phi_p^{n}+\phi_p^{n+1}).$$
We now propose the following compatible discretization of \eqref{eqn.B}-\eqref{eqn.psi}: 
\begin{eqnarray}
	\B_c^{n+1} &=& \B_c^{n} - \Delta t \, c_0 \nabla_c^p \times \E_p^{n+\halb} - \Delta t \, c_h \nabla_c^p \phi_p^{n+\halb}, 
	\label{eqn.B.fd} 
	\\ 
	\phi_p^{n+1} &=& \phi_p^{n} - \Delta t \, c_h \nabla_p^c \cdot \B_c^{n+\halb},  
	\label{eqn.phi.fd} 
	\\
	\E_p^{n+1} &=& \E_p^{n} + \Delta t \, c_0 \nabla_p^c \times \B_c^{n+\halb} - \Delta t \, c_h \nabla_p^c \psi_c^{n+\halb},
	\label{eqn.E.fd} 
	\\
	\psi_c^{n+1} &=& \psi_c^{n} - \Delta t \, c_h \nabla_c^p \cdot \E_p^{n+\halb}.
	\label{eqn.psi.fd} 
\end{eqnarray}
Inserting \eqref{eqn.B.fd} into \eqref{eqn.phi.fd} and using the discrete vector identity \eqref{eqn.div.rot} one obtains the following discrete wave equation for the cleaning scalar $\phi$: 
\begin{equation}
	\phi_p^{n+1} - \quart \Delta t^2 c_h^2 \nabla_p^c \cdot \nabla_c^p \phi_p^{n+1} = \phi_p^{n} - \Delta t \, c_h \nabla_p^c \cdot \B_c^{n} + \quart \Delta t^2 c_h^2 \nabla_p^c \cdot \nabla_c^p \phi_p^{n}.
	\label{eqn.phi.wave}
\end{equation} 
%Likewise, insertion of \eqref{eqn.E.fd} into \eqref{eqn.psi.fd} in combination with \eqref{eqn.div.rot} provides a discrete wave equation for the cleaning scalar $\psi$: 
%\begin{equation}
%	\psi_c^{n+1} - \quart \Delta t^2 c_h^2 \nabla_c^p \cdot \nabla_p^c \psi_c^{n+1} = \psi_c^{n} - \Delta t \, c_h \nabla_c^p \cdot \E_p^{n} + \quart \Delta t^2 c_h^2 \nabla_c^p \cdot \nabla_p^c \psi_c^{n}.
%	\label{eqn.psi.wave}
%\end{equation} 
%Since now $\psi_c^{n+1}$ is already known we can finally 
Inserting \eqref{eqn.B.fd} and \eqref{eqn.psi.fd} into \eqref{eqn.E.fd} we obtain
a discrete vector wave equation for the electric field 
\begin{equation}
	 \E_p^{n+1} + \quart \Delta t^2 \, c_0^2 \nabla_p^c \times \nabla_c^p \times \E_p^{n+1} 
	 - \quart \Delta t^2 \, c_h^2 \nabla_p^c \nabla_c^p \cdot \E_p^{n+1} = \mathbf{r}_p,
	 \label{eqn.E.wave} 
\end{equation} 
with the known right hand side 
\begin{equation}
\mathbf{r}_p = \E_p^{n} + \Delta t \, c_0 \nabla_p^c \times \B_c^{n} - \quart \Delta t^2 \, c_0^2 \nabla_p^c \times \nabla_c^p \times \E_p^{n} - \Delta t \, c_h \nabla_p^c \psi_c^{n}
+ \quart \Delta t^2 \, c_h^2 \nabla_p^c \nabla_c^p \cdot \E_p^{n}. 
\label{eqn.E.wave.rhs} 
\end{equation}
The above substitutions correspond to the use of the Schur complement in the linear system \eqref{eqn.B.fd}-\eqref{eqn.psi.fd}. 
Once the new cleaning scalar $\phi$ and the new electric field have been obtained, the magnetic field and the cleaning scalar $\psi$ can be directly updated via \eqref{eqn.B.fd} and \eqref{eqn.psi.fd}. This procedure is analogous to the post-projection stage in semi-implicit schemes for the incompressible Navier-Stokes equations \cite{HW65,patankar,patankar2} or in semi-implicit schemes for shallow water flows \cite{Casulli1990,CasulliCheng1992}. It is very interesting to note that the final equations to be solved are discrete second order wave equations for the electric field and for the cleaning variable $\phi$, respectively, and thus reflect the solution of the original Euler-Lagrange equations \eqref{eqn.el.A} and \eqref{eqn.el.phi} on a discrete level; not for the original potentials $\A$ and $Z$, but for their time derivatives $\E$ and $\phi$. 

In the following we study the total energy conservation and the asymptotic-preserving (AP) property of the proposed structure-preserving semi-implicit scheme. 

\begin{theorem}
	\label{thm.energy.simm} 	
In the case of periodic boundaries the scheme \eqref{eqn.B.fd}-\eqref{eqn.psi.fd} conserves the global discrete total energy 
$$\mathcal{E}^n = 
\sum \limits_{c \in \Omega} |\Omega_c| \halb (\B_c^n)^2 +
\sum \limits_{p \in \Omega} |\Omega_p| \halb (\phi_p^n)^2 +
\sum \limits_{p \in \Omega} |\Omega_p| \halb (\E_p^n)^2 +
\sum \limits_{c \in \Omega} |\Omega_c| \halb (\psi_c^n)^2
$$ 
exactly, i.e. 
$$\mathcal{E}^{n+1} = \mathcal{E}^n.$$  
\begin{proof}
	Computing the dot product of \eqref{eqn.B.fd} with $\B_c^{n+\halb}$ and of \eqref{eqn.E.fd} with $\E_p^{n+\halb}$, multiplying \eqref{eqn.phi.fd}  with $\phi_p^{n+\halb}$ and \eqref{eqn.psi.fd} with  $\psi_c^{n+\halb}$ yields 
\begin{eqnarray}
	|\Omega_c| \, \B_c^{n+\halb} \cdot \left( \B_c^{n+1} - \B_c^{n} \right) \!\! &=& \!\! - \Delta t \, \B_c^{n+\halb} \cdot  \left( c_0 \sum_{p \in \Omega_c} l_{pc} \mathbf{n}_{pc} \times \E_p^{n+\halb} + c_h \sum_{p \in \Omega_c} l_{pc} \mathbf{n}_{pc} \, \phi_p^{n+\halb} \right)\!\!, \nonumber 
	\label{eqn.B.dp} 
	\\ 
	|\Omega_p| \, \phi_p^{n+\halb} \left( \phi_p^{n+1} -  \phi_p^{n} \right) \!\! &=& \!\! - \Delta t \, \phi_p^{n+\halb} \, c_h \sum_{c \in \Omega_p} l_{pc} \mathbf{n}_{cp} \cdot \B_c^{n+\halb},  \nonumber  
	\label{eqn.phi.dp} 
	\\
	|\Omega_p| \, \E_p^{n+\halb} \cdot \left( \E_p^{n+1} - \E_p^{n} \right) \!\! &=& \!\! + \Delta t \, \E_p^{n+\halb} \cdot \left( c_0 \sum_{c \in \Omega_p} l_{pc} \mathbf{n}_{cp} \times \B_c^{n+\halb} - c_h \sum_{c \in \Omega_p} l_{pc} \mathbf{n}_{cp} \, \psi_c^{n+\halb} \right)\!\!, \nonumber 
	\label{eqn.E.dp} 
	\\
	|\Omega_c| \, \psi_c^{n+\halb} \left( \psi_c^{n+1} - \psi_c^{n} \right) \!\! &=& \!\! - \Delta t \, \psi_c^{n+\halb} \, c_h  \sum_{p \in \Omega_c} l_{pc} \mathbf{n}_{pc} \cdot \E_p^{n+\halb}. \nonumber 
	\label{eqn.psi.dp} 
\end{eqnarray}	
Since the quantities $\B_c^{n+\halb}$ and $\psi_c^{n+\halb}$ do not depend on the index $p$ and since $\phi_p^{n+\halb}$ and $\E_p^{n+\halb}$ do not depend on $c$ we can take these terms on the right hand side into the sums. Furthermore, we have that 
$$\B_c^{n+\halb} \cdot \left( \B_c^{n+1} - \B_c^{n} \right) = \halb (\B_c^{n+1})^2 - \halb (\B_c^{n})^2,$$ 
$$\phi_p^{n+\halb} \left( \phi_p^{n+1} -  \phi_p^{n} \right) = \halb (\phi_p^{n+1})^2 - \halb (\phi_p^{n})^2,$$ 
$$\E_p^{n+\halb} \cdot \left( \E_p^{n+1} - \E_p^{n} \right) = \halb (\E_p^{n+1})^2 - \halb (\E_p^{n})^2,$$
$$\psi_c^{n+\halb} \left( \psi_c^{n+1} -  \psi_c^{n} \right) = \halb (\psi_c^{n+1})^2 - \halb (\psi_c^{n})^2.$$ 
We thus obtain
\begin{eqnarray}
	|\Omega_c| \! \left( \halb (\B_c^{n+1})^2 - \halb (\B_c^{n})^2 \right) \!\!\!\!\!\! &=& \!\!\!\!\!  - \Delta t \!  \left( \! c_0 \!\! \sum_{p \in \Omega_c} \!\! l_{pc} \mathbf{n}_{pc} \cdot \E_p^{n+\halb} \! \times \B_c^{n+\halb} \! + c_h \!\! \sum_{p \in \Omega_c} \!\! l_{pc} \mathbf{n}_{pc} \! \cdot  \phi_p^{n+\halb} \B_c^{n+\halb} \! \right)\!\!, \nonumber 
	\label{eqn.B.dp} 
	\\ 
	|\Omega_p| \left( \halb (\phi_p^{n+1})^2 - \halb (\phi_p^{n})^2 \right) \!\!\!\!\! &=&\!\!\!\!\! - \Delta t \, c_h \sum_{c \in \Omega_p} l_{pc} \mathbf{n}_{cp} \cdot \B_c^{n+\halb} \phi_p^{n+\halb},  \nonumber  
	\label{eqn.phi.dp} 
	\\
	|\Omega_p|  \!  \left( \halb (\E_p^{n+1})^2 - \halb (\E_p^{n})^2 \right) \!\!\!\!\!\! &=& \!\!\!\!\!  + \Delta t \! \left(\! c_0 \!\! \sum_{c \in \Omega_p}\! l_{pc} \mathbf{n}_{cp} \cdot \B_c^{n+\halb} \! \times \E_p^{n+\halb} \! - c_h \!\! \sum_{c \in \Omega_p} \! l_{pc} \mathbf{n}_{cp} \, \psi_c^{n+\halb} \E_p^{n+\halb} \! \right)\!\!, \nonumber 
	\label{eqn.E.dp} 
	\\
	|\Omega_c|  \!  \left( \halb (\psi_c^{n+1})^2 - \halb (\psi_c^{n})^2 \right) \!\!\!\!\!\! &=& \!\!\!\!\!  - \Delta t \, \psi_c^{n+\halb} \, c_h  \sum_{p \in \Omega_c} l_{pc} \mathbf{n}_{pc} \cdot \E_p^{n+\halb}. \nonumber 
	\label{eqn.psi.dp} 
\end{eqnarray}	
Summing up each equation over all elements and summing up all equations yields 
\begin{eqnarray} 
&& \mathcal{E}^{n+1} - \mathcal{E}^n = 
\nonumber \\  
&& - \Delta t c_0 \sum_c \sum_{p \in \Omega_c} l_{pc} \mathbf{n}_{pc} \cdot \E_p^{n+\halb} \times \B_c^{n+\halb}
   + \Delta t c_0 \sum_p \sum_{c \in \Omega_p} l_{pc} \mathbf{n}_{cp} \cdot \B_c^{n+\halb} \times \E_p^{n+\halb}
\nonumber \\
&& - \Delta t c_h \sum_c \sum_{p \in \Omega_c} l_{pc} \mathbf{n}_{pc} \cdot \, \phi_p^{n+\halb} \B_c^{n+\halb}
   - \Delta t c_h \sum_p \sum_{c \in \Omega_p} l_{pc} \mathbf{n}_{cp} \cdot \, \B_c^{n+\halb} \phi_p^{n+\halb} 
\nonumber \\
&& - \Delta t c_h \sum_p \sum_{c \in \Omega_c} l_{pc} \mathbf{n}_{cp} \cdot \, \psi_c^{n+\halb} \E_p^{n+\halb}
   - \Delta t c_h \sum_c \sum_{p \in \Omega_p} l_{pc} \mathbf{n}_{pc} \cdot \, \E_p^{n+\halb} \psi_c^{n+\halb} = 0,
\end{eqnarray} 
since $\B_c^{n+\halb} \times \E_p^{n+\halb} = -\E_p^{n+\halb} \times \B_c^{n+\halb}$ and $\mathbf{n}_{cp}=-\mathbf{n}_{pc}$ and thus all pairwise interactions between $p$ and $c$ cancel. This proves total energy conservation and thus energy stability of the proposed numerical scheme. 
\end{proof}
\end{theorem}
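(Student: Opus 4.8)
The plan is to reproduce at the fully-discrete level the continuous energy argument of \eqref{eqn.sum}--\eqref{eqn.energy}: test each of the four update equations against the arithmetic mean of its own unknown at times $t^n$ and $t^{n+1}$, so that the time-difference terms telescope into increments of the quadratic energy. Concretely, I would take the dot product of the magnetic-field update \eqref{eqn.B.fd} with $\B_c^{n+\halb}$, multiply the $\phi$-update \eqref{eqn.phi.fd} by $\phi_p^{n+\halb}$, take the dot product of the electric-field update \eqref{eqn.E.fd} with $\E_p^{n+\halb}$, and multiply the $\psi$-update \eqref{eqn.psi.fd} by $\psi_c^{n+\halb}$. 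The crucial algebraic identity is $\mathbf{a}^{n+\halb}\cdot(\mathbf{a}^{n+1}-\mathbf{a}^n)=\halb(\mathbf{a}^{n+1})^2-\halb(\mathbf{a}^n)^2$, which is exactly what makes the Crank--Nicolson-type midpoint discretization conservative and which turns each left-hand side into the per-cell energy increment appearing in $\mathcal{E}^{n+1}-\mathcal{E}^n$.

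Second, I would rewrite the right-hand sides as discrete interface fluxes. Since $\B_c^{n+\halb}$ and $\psi_c^{n+\halb}$ are constant with respect to the vertex index $p$, while $\E_p^{n+\halb}$ and $\phi_p^{n+\halb}$ are constant with respect to the cell index $c$, these factors can be pulled inside the edge sums defining $\nabla_c^p$ and $\nabla_p^c$. Using the scalar triple product $\mathbf{n}\cdot(\E\times\B)=\B\cdot(\mathbf{n}\times\E)$ for the $c_0$-terms and a plain rearrangement for the $c_h$-terms, the four tested equations then exhibit a discrete Poynting flux $l_{pc}\,\mathbf{n}_{pc}\cdot(\E_p^{n+\halb}\times\B_c^{n+\halb})$ together with the two scalar coupling fluxes $l_{pc}\,\mathbf{n}_{pc}\cdot\phi_p^{n+\halb}\B_c^{n+\halb}$ and $l_{pc}\,\mathbf{n}_{pc}\cdot\psi_c^{n+\halb}\E_p^{n+\halb}$.

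Third, I would sum each identity over all control volumes of the appropriate type and then add the four resulting relations, so that the left-hand side collapses to exactly $\mathcal{E}^{n+1}-\mathcal{E}^n$. The heart of the argument is then a discrete \emph{summation-by-parts} step: the nested sum $\sum_c\sum_{p\in\Omega_c}$ arising from the $\B$- and $\psi$-equations and the nested sum $\sum_p\sum_{c\in\Omega_p}$ arising from the $\phi$- and $\E$-equations range over the very same set of primal-cell/dual-cell incidences and may therefore be reindexed into one another. Once the summation order is aligned, each interface contribution from the curl-coupled pair cancels because $\mathbf{n}_{cp}=-\mathbf{n}_{pc}$ together with $\B\times\E=-\E\times\B$, while each scalar-coupled contribution cancels by $\mathbf{n}_{cp}=-\mathbf{n}_{pc}$ alone. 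This is precisely the discrete counterpart of the flux-divergence structure of \eqref{eqn.energy}, and it is the duality encoded in the two mimetic operators $\nabla_c^p$ and $\nabla_p^c$ through the shared weights $l_{pc}$ and the antisymmetric normals that guarantees the cancellation.

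I expect the main obstacle to be the bookkeeping of this reindexing under the off-diagonal coupling: one must verify that the discrete curl and divergence are exact adjoints with respect to the mesh inner product, so that after swapping the summation order every edge term between a primal cell $c$ and the adjacent dual cell $p$ is matched exactly once by its mirror image carrying the opposite normal. The periodic boundary conditions are essential here, since via \eqref{eqn.gauss.disc} they close every surface sum and leave no uncancelled boundary flux; with non-periodic boundaries the same computation would instead produce a surviving boundary energy-flux term. Once the adjointness and the sign structure are in place, $\mathcal{E}^{n+1}=\mathcal{E}^n$ follows at once.
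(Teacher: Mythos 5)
Your proposal is correct and follows essentially the same route as the paper's proof: testing each midpoint update against the corresponding time-averaged unknown, using the identity $\mathbf{a}^{n+\halb}\cdot(\mathbf{a}^{n+1}-\mathbf{a}^n)=\halb(\mathbf{a}^{n+1})^2-\halb(\mathbf{a}^n)^2$, pulling the $c$- and $p$-indexed factors into the edge sums, and cancelling all pairwise interface contributions after reindexing via $\mathbf{n}_{cp}=-\mathbf{n}_{pc}$ and the antisymmetry of the cross product. The summation-by-parts/adjointness framing and the explicit scalar triple product step are just a more articulated description of the same cancellation the paper performs.
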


\begin{theorem}
	\label{thm.ap} 	
	In the case of periodic boundaries and well-prepared initial data $\phi_p^0 = \phi^0=const$  and $\psi_c^0 = \psi^0=const$ the scheme \eqref{eqn.B.fd}-\eqref{eqn.psi.fd} is asymptotic-preserving (AP) in the sense that the discrete divergence $\nabla_c^p \cdot \E_p^{n+\halb} \to \mathcal{O}(\epsilon^2)$ and $\nabla_p^c \cdot \B_c^{n+\halb} \to \mathcal{O}(\epsilon^2)$ when $c_h \to \infty$ for fixed $c_0$, i.e. when $\epsilon = c_0/c_h \to 0$. 
	\begin{proof}
		Formal asymptotic expansion of the discrete solution in terms of $\epsilon = c_0/c_h$ yields 
		\begin{eqnarray}
			\B_c^n   &=& \B_{c,0}^n + \epsilon \B_{c,1}^n + \epsilon^2 \B_{c,2}^n + \mathcal{O}(\epsilon^3),  \nonumber \\ 
			\phi_p^n &=& \phi_{p,0}^n + \epsilon \phi_{p,1}^n + \epsilon^2 \phi_{p,2}^n + \mathcal{O}(\epsilon^3),  \nonumber \\ 
			\E_p^n &=& \E_{p,0}^n + \epsilon \E_{p,1}^n + \epsilon^2 \E_{p,2}^n + \mathcal{O}(\epsilon^3),  \nonumber \\ 
			\psi_c^n   &=& \psi_{c,0}^n + \epsilon \psi_{c,1}^n + \epsilon^2 \psi_{c,2}^n + \mathcal{O}(\epsilon^3).    
			\label{eqn.hilbert.expansion} 
		\end{eqnarray}	
	Insertion into the semi-implicit scheme \eqref{eqn.B.fd}-\eqref{eqn.psi.fd} provides the following relations up to order $\epsilon^0$: 		
	\begin{eqnarray}
		\epsilon^{-1} \,  \nabla_c^p \phi_{p,0}^{n+\halb} + 
		\epsilon^{0} \left( c_0^{-1} \frac{\B_{c,0}^{n+1} - \B_{c,0}^{n}}{\Delta t}  +    \nabla_c^p \times \E_{p,0}^{n+\halb} +   \nabla_c^p \phi_{p,1}^{n+\halb} \right)  &=& 0, 
		\label{eqn.B.fd.a} 
		\\ 
		\epsilon^{-1} \, \nabla_p^c \cdot \B_{c,0}^{n+\halb} + 
		\epsilon^{0} \left( c_0^{-1} \frac{\phi_{p,0}^{n+1} + \phi_{p,0}^{n}}{\Delta t} +
		\nabla_p^c \cdot \B_{c,1}^{n+\halb} \right) &=&  0,   
		\label{eqn.phi.fd.a} 
		\\
		\epsilon^{-1} \, \nabla_p^c \psi_{c,0}^{n+\halb} + 
		\epsilon^{0} \left( c_0^{-1} \frac{\E_{p,0}^{n+1} - \E_{p,0}^{n}}{\Delta t}  -   \nabla_p^c \times \B_{c,0}^{n+\halb} + \nabla_p^c \psi_{c,1}^{n+\halb} \right) & = & 0,
		\label{eqn.E.fd.a} 
		\\
		\epsilon^{-1} \, \nabla_c^p \cdot \E_{p,0}^{n+\halb}  + 
		\epsilon^{0} \left( c_0^{-1} \frac{\psi_{c,0}^{n+1} - \psi_{c,0}^{n}}{\Delta t} +  \nabla_c^p \cdot \E_{p,1}^{n+\halb} \right) &=& 0.
		\label{eqn.psi.fd.a} 
	\end{eqnarray}	
	From the leading order terms that scale with $\epsilon^{-1}$ in \eqref{eqn.phi.fd.a} and \eqref{eqn.psi.fd.a} we immediately obtain   
	\begin{equation}
	 	\nabla_p^c \cdot \B_{c,0}^{n+\halb} = 0, \qquad \textnormal{and} \qquad  \nabla_c^p \cdot \E_{p,0}^{n+\halb} = 0.
		\label{eqn.div.O0}	
	\end{equation}
	From the leading order terms in \eqref{eqn.B.fd.a} and \eqref{eqn.E.fd.a} one has $\nabla_c^p \phi_{p,0}^{n+\halb} = 0$ and $\nabla_p^c \psi_{c,0}^{n+\halb}= 0$. Since the initial data are assumed to be well-prepared, i.e. $\phi_p^0 = \phi^0$ and $\psi_c^0 = \psi^0$, i.e. $\nabla_c^p \phi_{p,0}^{n} = 0$ and $\nabla_p^c \psi_{c,0}^{n}= 0$ for $n=0$, we thus obtain that at leading order we have $\nabla_c^p \phi_{p,0}^{n+1} = 0$ and $\nabla_p^c \psi_{c,0}^{n+1}= 0$ for $n=0$. Via induction it is obvious that this holds also for all other $n>0$. This means that $\phi_{p,0}^{n} = \phi_{0}^{n}$ and $\psi_{c,0}^{n} = \psi_{0}^{n}$ are both constant in space and are only functions of time. Inserting these results into \eqref{eqn.phi.fd.a} and \eqref{eqn.psi.fd.a} and summing up over all elements yields 
	\begin{align}
	c_0^{-1} \sum_p |\Omega_p| \frac{\phi_{0}^{n+1} - \phi_{0}^{n}}{\Delta t} &
	=
	-  \sum_p |\Omega_p| \, \nabla_p^c \cdot \B_{c,1}^{n+\halb} = 
	-  \sum_p \sum_{c \in \Omega_p} l_{pc} \mathbf{n}_{cp} \cdot \B_{c,1}^{n+\halb} = 0,  
	\label{eqn.phi.fd.b} 
	\\
	c_0^{-1} \sum_c |\Omega_c|  \frac{\psi_{0}^{n+1} - \psi_{0}^{n}}{\Delta t} &=
	- \sum_c |\Omega_c| \, \nabla_c^p \cdot \E_{p,1}^{n+\halb} = 
	-  \sum_c \sum_{p \in \Omega_c} l_{pc} \mathbf{n}_{pc} \cdot \E_{p,1}^{n+\halb} = 0,
	\label{eqn.psi.fd.b} 
	\end{align}		
	where the last identities in the above equations follow from the fact that the order of the indices in the double sums can be rearranged thanks to the periodic boundary conditions and where \eqref{eqn.gauss.disc} has been used:
	\begin{eqnarray}
	 && \sum_p \sum_{c \in \Omega_p} l_{pc} \mathbf{n}_{cp} \cdot \B_{c,1}^{n+\halb} = 
	    \sum_c \sum_{p \in \Omega_c} l_{pc} \mathbf{n}_{cp} \cdot \B_{c,1}^{n+\halb} = 
	    \sum_c \B_{c,1}^{n+\halb} \cdot \sum_{p \in \Omega_c} l_{pc} \mathbf{n}_{cp} = 0, \qquad  
	\label{eqn.phi.fd.c} 
	\\
	&&  \sum_c \sum_{p \in \Omega_c} l_{pc} \mathbf{n}_{pc} \cdot \E_{p,1}^{n+\halb} = 
		\sum_p \sum_{c \in \Omega_p} l_{pc} \mathbf{n}_{pc} \cdot \E_{p,1}^{n+\halb} = 
		\sum_p \E_{p,1}^{n+\halb} \cdot \sum_{c \in \Omega_p} l_{pc} \mathbf{n}_{pc} = 0, \qquad 
	\label{eqn.psi.fd.c} 
\end{eqnarray}	
We thus have from \eqref{eqn.phi.fd.b} and \eqref{eqn.psi.fd.b} that 
	\begin{equation}\label{eqn.phi.psi.fd.d}
	c_0^{-1} |\Omega| \frac{\phi_{0}^{n+1} - \phi_{0}^{n}}{\Delta t} = 0,
	\qquad 
	c_0^{-1} |\Omega| \frac{\psi_{0}^{n+1} - \psi_{0}^{n}}{\Delta t} = 0, 
\end{equation}	
with $|\Omega| = \sum_p |\Omega_p| = \sum_c |\Omega_c|$ the area of the computational domain. Hence, at leading order the discrete time derivatives of 
the discrete scalar fields $\phi$ and $\psi$ vanish. Substituting this result with $\phi_{p,0}^{n} = \phi_{0}^{n}$ and $\psi_{c,0}^{n} = \psi_{0}^{n}$ back into \eqref{eqn.phi.fd.a} and \eqref{eqn.psi.fd.a} yields
	\begin{equation}
	\nabla_p^c \cdot \B_{c,1}^{n+\halb} = 0, \qquad \textnormal{and} \qquad  \nabla_c^p \cdot \E_{p,1}^{n+\halb} = 0.
	\label{eqn.div.O1}	
\end{equation}
With the expansion \eqref{eqn.hilbert.expansion} we thus obtain the sought result 
	\begin{equation}
	\nabla_p^c \cdot \B_{c}^{n+\halb} = \mathcal{O}(\epsilon^2), \qquad \textnormal{and} \qquad  
	\nabla_c^p \cdot \E_{p}^{n+\halb} = \mathcal{O}(\epsilon^2).
	\label{eqn.div.O1}	
\end{equation}
for $\epsilon \to 0$. 
	\end{proof}
\end{theorem}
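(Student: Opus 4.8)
The plan is to transcribe, at the fully discrete level, the formal Hilbert-expansion argument already carried out for the continuous Maxwell-Munz system in Section \ref{sec.model}. First I would divide the four update relations \eqref{eqn.B.fd}--\eqref{eqn.psi.fd} by $c_0$ and write $c_h = c_0/\epsilon$, so that every term stemming from the acoustic cleaning subsystem acquires a prefactor $\epsilon^{-1}$ while the genuinely electromagnetic terms stay at order $\epsilon^0$. Substituting the power-series ansatz $\B_c^n = \B_{c,0}^n + \epsilon\,\B_{c,1}^n + \dots$ (and analogously for $\phi_p^n$, $\E_p^n$, $\psi_c^n$) into the rescaled scheme and collecting equal powers of $\epsilon$ then yields, at each time level, a hierarchy of discrete relations that is the exact staggered counterpart of the continuous expansions \eqref{eqn.B.eps}--\eqref{eqn.psi.eps}. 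Because the scheme is linear and the mimetic operators $\nabla_c^p$, $\nabla_p^c$ are $\epsilon$-independent, the order counting is clean and no remainder term interferes with the leading orders.

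The second step extracts the leading-order ($\epsilon^{-1}$) balances. From the discrete $\phi$- and $\psi$-updates one reads off $\nabla_p^c \cdot \B_{c,0}^{n+\halb}=0$ and $\nabla_c^p \cdot \E_{p,0}^{n+\halb}=0$, i.e. the leading-order magnetic and electric fields are discretely divergence free. From the discrete $\B$- and $\E$-updates one obtains $\nabla_c^p \phi_{p,0}^{n+\halb}=0$ and $\nabla_p^c \psi_{c,0}^{n+\halb}=0$, so the leading-order cleaning scalars have vanishing discrete gradient and are therefore spatially constant. To turn this into a statement valid at every $n$ I would argue by induction: the well-prepared data $\phi_p^0=\phi^0$, $\psi_c^0=\psi^0$ give spatial constancy at $n=0$, and the half-step relation $\phi_{p,0}^{n+\halb}=\halb(\phi_{p,0}^{n}+\phi_{p,0}^{n+1})$ together with constancy of $\phi_{p,0}^{n}$ forces constancy of $\phi_{p,0}^{n+1}$, and likewise for $\psi_{c,0}$. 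Hence $\phi_{p,0}^{n}=\phi_0^n$ and $\psi_{c,0}^{n}=\psi_0^n$ are functions of time only.

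The decisive and most delicate step is to push the divergence estimate one order further, to $\mathcal{O}(\epsilon^2)$. I would take the $\epsilon^0$ part of the discrete $\phi$- and $\psi$-equations, multiply by $|\Omega_p|$ respectively $|\Omega_c|$, and sum over all control volumes. Since $\phi_0^n$ and $\psi_0^n$ are spatially constant they factor out of the global sum, so that everything reduces to showing that the aggregated divergence contributions $\sum_p |\Omega_p|\,\nabla_p^c\cdot\B_{c,1}^{n+\halb}$ and $\sum_c |\Omega_c|\,\nabla_c^p\cdot\E_{p,1}^{n+\halb}$ vanish. This is where periodicity enters: the double sums over the staggered primal/dual mesh can be reindexed by swapping the roles of $c$ and $p$ in the neighbour relations, after which the inner sums are exactly the discrete Gauss identities \eqref{eqn.gauss.disc} and collapse to zero. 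This gives $d\phi_0^n/dt = d\psi_0^n/dt = 0$ at leading order, and substituting back into the $\epsilon^0$ relations yields the order-$\epsilon$ divergence-free conditions $\nabla_p^c\cdot\B_{c,1}^{n+\halb}=0$ and $\nabla_c^p\cdot\E_{p,1}^{n+\halb}=0$. Combining the order-$0$ and order-$1$ vanishing with the expansion then delivers the claimed $\nabla_p^c\cdot\B_c^{n+\halb}=\mathcal{O}(\epsilon^2)$ and $\nabla_c^p\cdot\E_p^{n+\halb}=\mathcal{O}(\epsilon^2)$.

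I expect the main obstacle to be precisely this global summation and reindexing on the staggered mesh: one must verify that, under periodic boundary conditions, the mismatch between the primal neighbour sum ($p\in\Omega_c$) and the dual neighbour sum ($c\in\Omega_p$) can be interchanged without boundary remainders, so that the discrete analogue of the continuous Gauss-theorem argument \eqref{eqn.gauss} closes exactly. The induction propagating spatial constancy of the leading-order scalars is conceptually straightforward, but it must be stated with care because of the implicit, half-step coupling between levels $n$ and $n+1$.
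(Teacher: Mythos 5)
Your proposal is correct and follows essentially the same route as the paper's proof: the same Hilbert expansion in $\epsilon=c_0/c_h$, the same extraction of the $\epsilon^{-1}$ balances giving discrete divergence-freeness at leading order and spatial constancy of $\phi_{p,0}$, $\psi_{c,0}$ by induction from the well-prepared data, and the same global summation with reindexing of the staggered double sums under periodicity via the discrete Gauss identities to kill the leading-order time derivatives and obtain the order-$\epsilon$ divergence conditions. No substantive differences from the paper's argument.
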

Furthermore, for initial data that are compatible with the vacuum Maxwell equations, i.e. that satisfy $\nabla \cdot \B = 0$, $\nabla \cdot  \E = 0$ and $\phi = \psi = 0$ at $t=0$ via the discrete identity \eqref{eqn.div.rot} it is trivial to prove that the proposed structure-preserving staggered semi-discrete scheme \eqref{eqn.B.fd}-\eqref{eqn.psi.fd} preserves the divergence-free condition of the magnetic and electric field exactly at the discrete level for all times. 

\section{Numerical results}
\label{sec.results}

In all of the following tests we set $c_0=1$ and use the nine-stage seventh-order accurate Runge-Kutta scheme of Butcher \cite{butcher} to integrate the semi-discrete HTC scheme \eqref{eqn.flux2dfv} in time. Furthermore, in all the tests we use periodic boundary conditions everywhere.  

\subsection{Numerical convergence study} 
We first carry out a systematic numerical convergence study of the two numerical schemes presented in this paper. For this purpose we consider the following initial data of a planar wave travelling in the direction $\normal = (1,1,0)$. The initial condition is given by 
\begin{eqnarray}
	& \B(x, y)  = \mathbf{B_0} \sin\left( \pi (x-y) \right), \qquad & \phi(x,y) = 0.25\sin \left(\pi (x-y)\right),	\\
	& \E(x, y)  = \mathbf{E_0} \sin\left( \pi (x-y) \right), \qquad & \psi(x,y) = 0.5  \sin\left(\pi (x-y)\right), 
\end{eqnarray}
with $\mathbf{B_0}=(0.25b, -0.25b, 1)^T $, $ \mathbf{E_0}=(1.5b, 0.5b, 0)^T $ and $b=\sqrt{2}/2$.
The parameters of the model are chosen to be $ c_h=1.0 $ and $ c_0=1.0 $. The computational domain $\Omega=[-1,1]^2$ is discretized with a sequence of successively refined uniform grids composed of $ N\times N $ elements and simulations are run for one period until a final time of $t=\sqrt{2}$, when the exact solution of the problem coincides again with the initial condition. The semi-discrete HTC scheme employs a $7^{\text{th}} $-order accurate Runge-Kutta method for time discretization \cite{butcher}. In both schemes, the time step is forced to obey the $\mathrm{CFL}$ stability condition with $\mathrm{CFL} =0.9 $, i.e. $\Delta t = \mathrm{CFL}/(c_0/\Delta x + c_0/\Delta y)$. Tables \ref{tab:HTC_err} and \ref{tab:SIMM_err} show the $L^2$ errors of all relevant quantities at the final time depending on the chosen grid spacing. As one can observe, both methods converge with second order of accuracy in both space and time. 

\begin{table}[!h]
	\centering
	\renewcommand{\arraystretch}{1.1}
	\begin{tabular}{l|cccc|ccc}
		\hline
		&\multicolumn{4}{c|}{$L^2$ errors} & \multicolumn{3}{c}{Numerical convergence} \\				
		$N$	&20		&40		&80		&160 & \multicolumn{3}{c}{order}		\\
		\hline		
		$B_1$	&2.57$\cdot10^{-2}$	&6.45$\cdot10^{-3}$	&1.61$\cdot10^{-3}$	&4.04$\cdot10^{-4}$	&1.99	&2.00	&2.00	\\
		$B_2$	&2.57$\cdot10^{-2}$	&6.45$\cdot10^{-3}$	&1.61$\cdot10^{-3}$	&4.04$\cdot10^{-4}$	&1.99	&2.00	&2.00	\\
		$B_3$	&1.45$\cdot10^{-1}$	&3.65$\cdot10^{-2}$	&9.13$\cdot10^{-3}$	&2.28$\cdot10^{-3}$	&1.99	&2.00	&2.00	\\
		$\phi$	&3.63$\cdot10^{-2}$	&9.12$\cdot10^{-3}$	&2.28$\cdot10^{-3}$	&5.71$\cdot10^{-4}$	&1.99	&2.00	&2.00	\\
		$E_1$	&1.54$\cdot10^{-1}$	&3.87$\cdot10^{-2}$	&9.69$\cdot10^{-3}$	&2.42$\cdot10^{-3}$	&1.99	&2.00	&2.00	\\
		$E_2$	&5.14$\cdot10^{-2}$	&1.29$\cdot10^{-2}$	&3.23$\cdot10^{-3}$	&8.07$\cdot10^{-4}$	&1.99	&2.00	&2.00	\\
		$\psi$	&7.27$\cdot10^{-2}$	&1.82$\cdot10^{-2}$	&4.57$\cdot10^{-3}$	&1.14$\cdot10^{-3}$	&1.99	&2.00	&2.00	\\
		\hline
	\end{tabular}
	\caption{$L^2$ error norms and corresponding empirical convergence rates for the planar wave travelling in the direction $\normal = (1,1,0)$ obtained with the semi-discrete HTC scheme on uniform grids composed of $N \times N$  elements. }
	\label{tab:HTC_err}
\end{table}

\begin{table}[!h]
	\centering
	\renewcommand{\arraystretch}{1.1}
	\begin{tabular}{l|cccc|ccc}
		\hline
		&\multicolumn{4}{c|}{$L^2$ errors} & \multicolumn{3}{c}{Numerical convergence} \\				
		$N$	&20		&40		&80		&160 & \multicolumn{3}{c}{order}		\\
		\hline		
		$B_1$	&3.06$\cdot10^{-2}$	&7.74$\cdot10^{-3}$	&1.94$\cdot10^{-3}$	&4.85$\cdot10^{-4}$	&1.98	&2.00	&2.00	\\
		$B_2$	&3.06$\cdot10^{-2}$	&7.74$\cdot10^{-3}$	&1.94$\cdot10^{-3}$	&4.85$\cdot10^{-4}$	&1.98	&2.00	&2.00	\\
		$B_3$	&1.73$\cdot10^{-1}$	&4.38$\cdot10^{-2}$	&1.10$\cdot10^{-2}$	&2.75$\cdot10^{-3}$	&1.98	&2.00	&2.00	\\
		$\phi$	&4.33$\cdot10^{-2}$	&1.09$\cdot10^{-2}$	&2.74$\cdot10^{-3}$	&6.86$\cdot10^{-4}$	&1.98	&2.00	&2.00	\\
		$E_1$	&1.84$\cdot10^{-1}$	&4.64$\cdot10^{-2}$	&1.16$\cdot10^{-2}$	&2.91$\cdot10^{-3}$	&1.98	&2.00	&2.00	\\
		$E_2$	&6.12$\cdot10^{-2}$	&1.55$\cdot10^{-2}$	&3.88$\cdot10^{-3}$	&9.71$\cdot10^{-4}$	&1.98	&2.00	&2.00	\\
		$\psi$	&8.65$\cdot10^{-2}$	&2.19$\cdot10^{-2}$	&5.49$\cdot10^{-3}$	&1.37$\cdot10^{-3}$	&1.98	&2.00	&2.00	\\
		\hline
	\end{tabular}
	\caption{$L^2$ error norms and corresponding empirical convergence rates for the planar wave travelling in the direction $\normal = (1,1,0)$ problem obtained with the fully-discrete semi-implicit scheme on uniform grids composed of $N \times N$  elements. }
	\label{tab:SIMM_err}
\end{table}
In addition Figure \ref{fig:ErrEn} shows the temporal evolution of the relative total energy error $\mathcal{E}^n/\mathcal{E}^0 - 1$ for both schemes and for all meshes considered in the convergence study. As one can observe, and as expected, total energy is preserved up to machine precision in all cases.  
\begin{figure}[!h]
	\centering
	\includegraphics[width=0.45\textwidth]{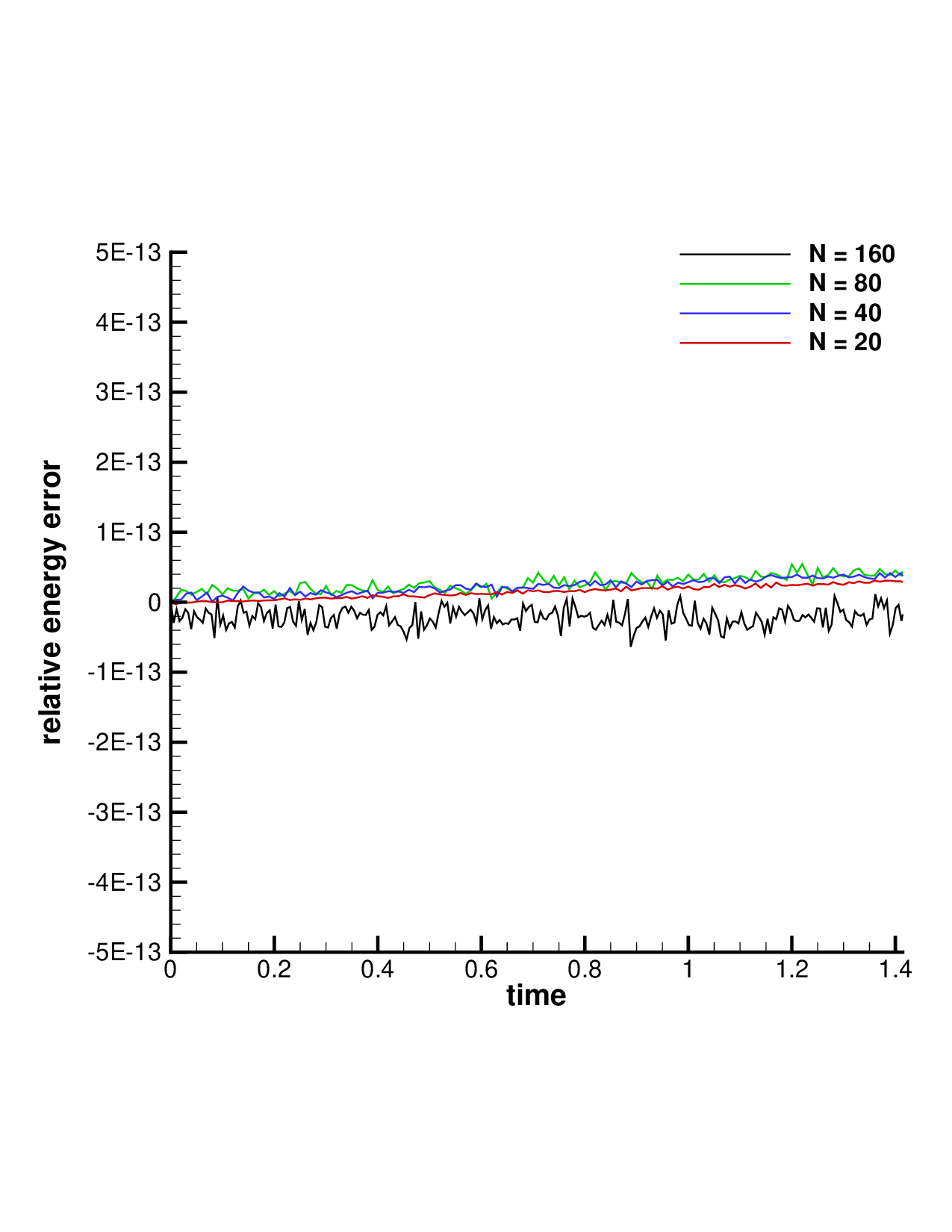}\quad \includegraphics[width=0.45\textwidth]{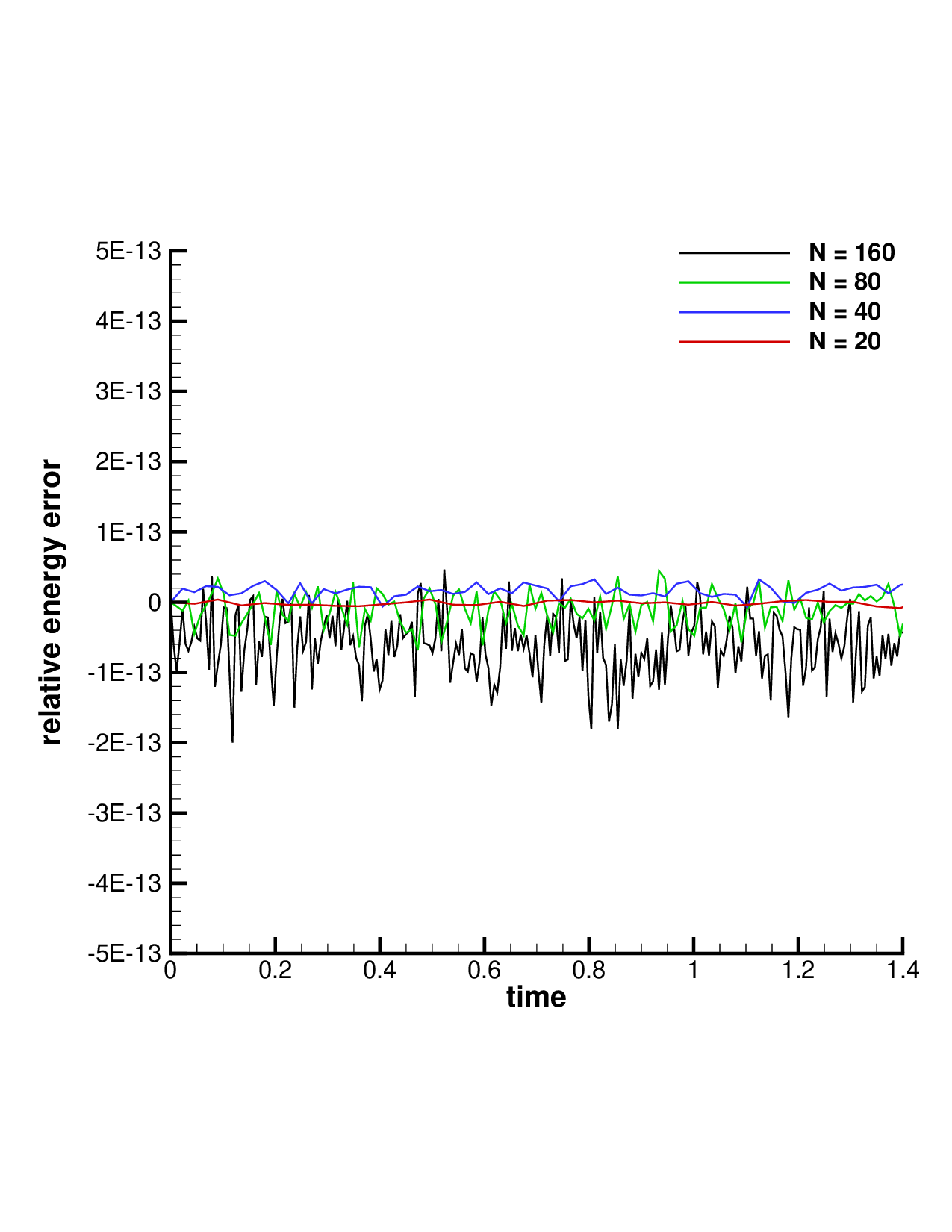}
	\caption{Temporal evolution of the relative total energy error $\mathcal{E}^n/\mathcal{E}^0 - 1$ for a sequence of successively refined uniform grids composed of $ N\times N $ elements obtained with the semi-discrete HTC scheme (left panel) as well as the fully-discrete semi-implicit scheme (right panel).}
	\label{fig:ErrEn}
\end{figure}

\subsection{Discrete total energy conservation and exact divergence preservation} 
Both the semi-discrete HTC scheme and the fully discrete semi-implicit scheme are exactly energy conservative. Furthermore, the structure-preserving staggered semi-implicit scheme is also exactly compatible with the divergence free condition of the electric and the magnetic field provided the initial data are compatible with those of the vacuum Maxwell equations. To test these properties of the schemes, we therefore solve the following test problem, given by the initial data 
\begin{eqnarray}
	& \B(x, y)  = \mathbf{B_0} \exp \left( -\halb \x^2/\sigma^2 \right), \qquad & \phi(x,y) = \phi_0 \exp\left(-\halb \x^2/\sigma^2 \right),	\nonumber \\
	& \E(x, y)  = \mathbf{E_0} \exp \left( -\halb \x^2/\sigma^2 \right), \qquad & \psi(x,y) = \psi_0 \exp\left(-\halb \x^2/\sigma^2 \right), 
	\label{eqn.ic.gauss}
\end{eqnarray}
Simulations are run for all schemes in the domain $\Omega=[-1,1]^2$ until a final time of $t=10$, which is substantially larger than the final time of the previous test. We propose two different test cases, T1 and T2. In the first test (T1) we choose initial data that are compatible with the original vacuum Maxwell equations. We therefore set $\mathbf{B_0} = \mathbf{E_0} = (0,0,10^{-2})$ and $\phi_0 = \psi_0 = 0$.  
In the second test (T2) we choose initial data that are deliberately not compatible with the original vacuum Maxwell equations, setting 
$\mathbf{B_0} = \mathbf{E_0} = (0.25 \cdot 10^{-2}, 0, 10^{-2})$ and $\phi_0 = \psi_0 = 0.5 \cdot 10^{-2}$. In both tests the half width is chosen as $\sigma=0.2$. As in the previous test case, we set $c_0=1$, $c_h=1$. 

The left panel of Figure \ref{fig:ErrEn.Gaussian} shows the temporal evolution of the relative total energy error $\mathcal{E}^n/\mathcal{E}^0 - 1$ provided by the fully-discrete semi-implicit scheme (SIMM) on a fixed mesh composed of $50 \times 50$ elements for both tests T1 and T2, and time step $\Delta t = \mathrm{CFL}/(c_0/\Delta x + c_0/\Delta y)$, where $ \mathrm{CFL}=0.9 $. 
In addition, test T2 has been also solved employing the semi-discrete HTC scheme on a fixed mesh composed of $160 \times 160$ elements evolved in time according to a seventh-order accurate Runge-Kutta method with a time step restricted by the $\mathrm{CFL}$ stability condition with $ \mathrm{CFL}=0.6 $.
As one can observe, the total energy is preserved up to machine precision in all the cases. The right panel of Figure \ref{fig:ErrEn.Gaussian} shows the time evolution of the divergence errors of the magnetic and the electric field obtained with the SIMM scheme for test T1 . As expected, the divergence errors remain of the order of machine precision for all times. 
We now repeat test case T2 also for a nonlinear total energy potential. We choose a total energy density given by
\begin{equation}
	\mathcal{E} = c_0\exp\left(\halb \B^2 \right) +  c_0\exp\left(\halb \E^2 \right) + \frac{c_h^2}{c_0} \exp\left(\halb \phi^2 \right)  + \frac{c_h^2}{c_0} \exp\left(\halb \psi^2 \right).  
	\label{eqn.totalenergy_nonlin} 
\end{equation}
The simulation is run for the semi-discrete HTC scheme employing a nine stages seventh-order accurate Runge-Kutta method for time discretization, where the time step is restricted by a CFL stability condition with CFL$ =0.9 $.
The temporal evolution of the relative total energy error $\mathcal{E}^n/\mathcal{E}^0 - 1$ on a fixed mesh composed of $160 \times 160$ elements is reported in the left panel of Figure \ref{fig:ErrEn.Gaussian}. As expected, total energy is conserved up to machine precision. 

\begin{figure}[!h]
	\centering
	\includegraphics[width=0.45\textwidth]{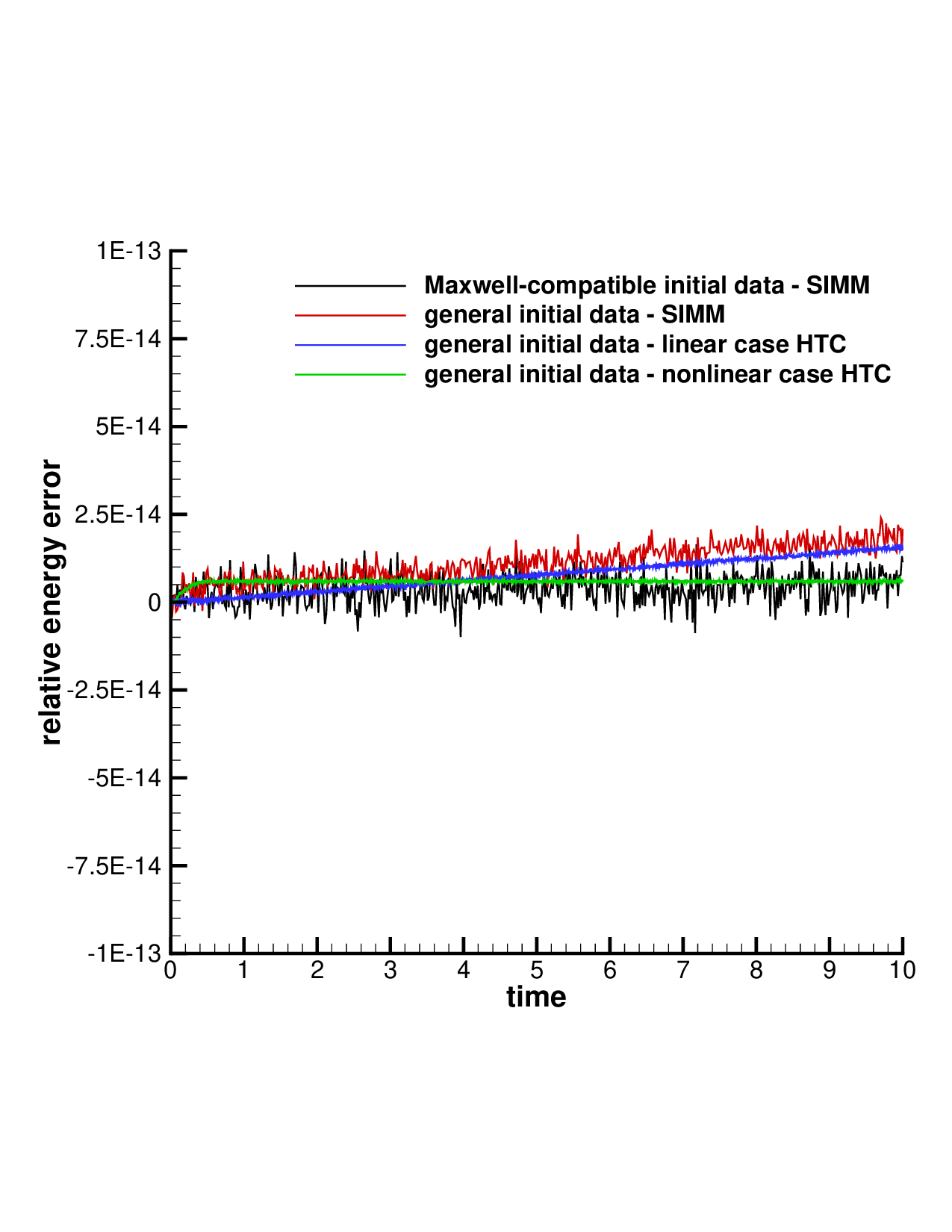}  \quad 
	\includegraphics[width=0.45\textwidth]{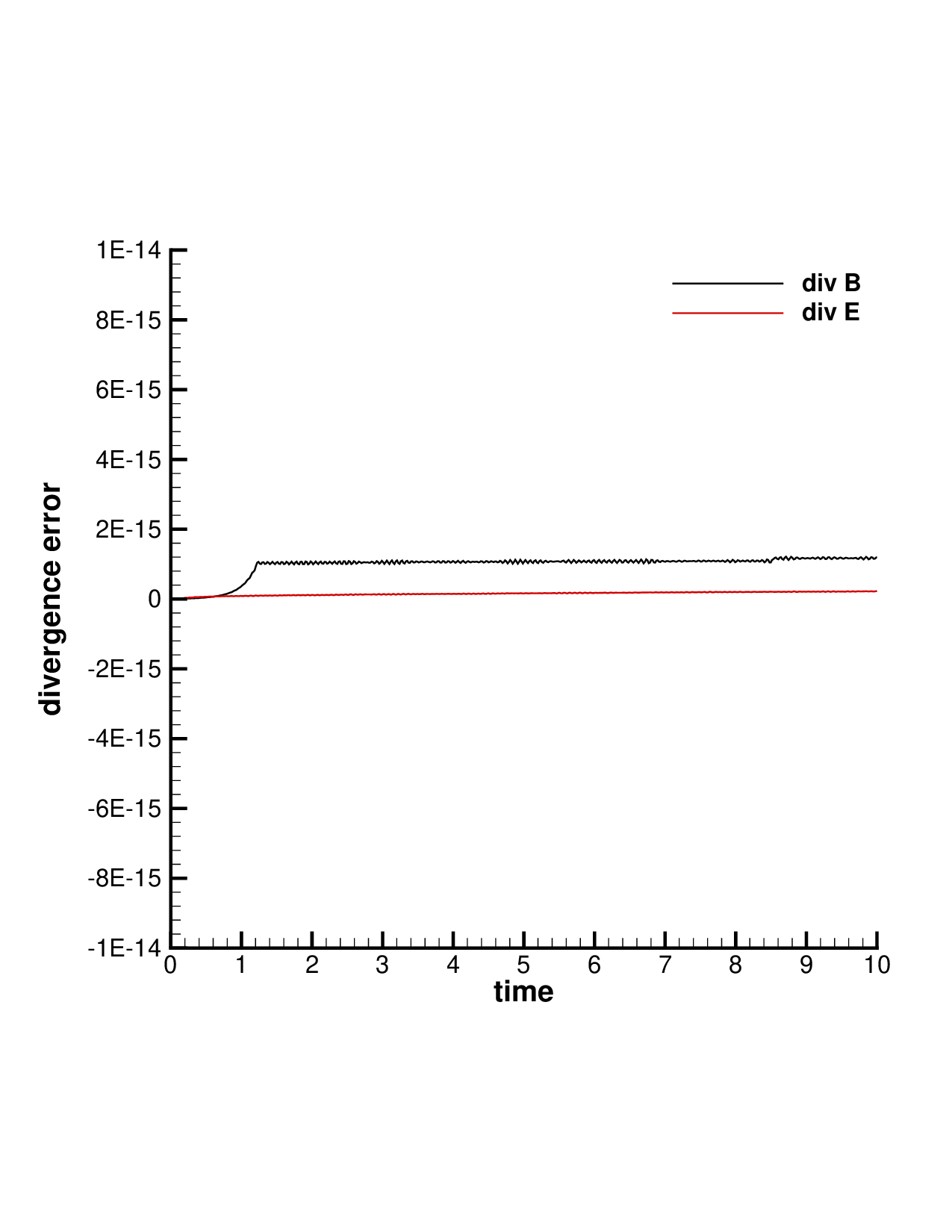}
	\caption{Left panel: Temporal evolution of the relative total energy error $\mathcal{E}^n/\mathcal{E}^0 - 1$ for initial data that is compatible with the vacuum Maxwell equations (black line), for general initial data employing the semi-implicit scheme (red line) as well as the HTC scheme (blue line) and for general initial data in case the nonlinear total energy potential \eqref{eqn.totalenergy_nonlin} is used and the problem solved with the HTC scheme (green line). Right panel: time series of the divergence errors of the magnetic and electric field for the SIMM scheme and for initial data that is compatible with the vacuum Maxwell equations. }
	\label{fig:ErrEn.Gaussian}
\end{figure}

\subsection{Discrete asymptotic preserving property for $c_h \to \infty$} 
In this last test we show numerical evidence for the fact that the staggered semi-implicit structure-preserving scheme is also asymptotic preserving (AP) in the limit $c_h \to \infty$, i.e. $\epsilon = c_0/c_h \to 0$. For this purpose we use again the same initial condition \eqref{eqn.ic.gauss} as in the previous section, but with amplitudes $\mathbf{B_0} = \mathbf{E_0} = (10^{-4}, 0, 10^{-2})$ and $\phi_0 = \psi_0 = 0$, to satisfy the hypothesis of Theorem \ref{thm.ap} but in which the magnetic and electric field are initially \textit{not} divergence-free. We now run a set of simulations on the domain $\Omega=[-1,+1]^2$ discretized with $40 \times 40$ elements until a final time of $t=0.1$ with a fixed time step of size $\Delta t = 10^{-2}$. We choose different cleaning speeds $c_h$ in the range between $10^2$ and $10^5$ and compute the discrete divergence of the magnetic and electric fields in terms of $c_h$. Table \ref{tab:SIMM_AP} clearly shows that the divergence errors converge quadratically in the parameter $\epsilon = c_0 / c_h$, as expected and as predicted by Theorem \ref{thm.ap}.  

\begin{table}[!h]
	\centering
	\renewcommand{\arraystretch}{1.15}
	\begin{tabular}{cc|cc|cc}
		\hline
		\multicolumn{2}{c|}{$c_0=1$} &\multicolumn{2}{c|}{$L^2$ divergence errors} & \multicolumn{2}{c}{Convergence order} \\				
		$c_h$	& $\epsilon$		&  $\nabla_p^c \cdot \B_c^{n+\halb}$		&$\nabla_c^p \cdot \E_p^{n+\halb}$	 & $\nabla_p^c \cdot \B_c^{n+\halb}$		&$\nabla_c^p \cdot \E_p^{n+\halb}$ 		\\
		\hline		
		$10^2$	& $10^{-2}$ & 3.831380$\cdot10^{-5}$      & 3.831579$\cdot10^{-5}$	&     & 	 \\
		$10^3$	& $10^{-3}$ & 3.569500$\cdot10^{-6}$      & 3.569623$\cdot10^{-6}$	& 1.0 & 1.0  \\
		$10^4$  & $10^{-4}$ & 4.351311$\cdot10^{-8}$      & 4.351523$\cdot10^{-8}$	& 1.9 & 1.9  \\
		$10^5$	& $10^{-5}$ & 4.368280$\cdot10^{-10}$     & 4.358525$\cdot10^{-10}$	& 2.0 & 2.0	 \\
		\hline
	\end{tabular}
	\caption{$L^2$ error norms at $t=0.1$ and corresponding convergence rates of the divergence errors of the magnetic and the electric field in terms of the dimensionless parameter $\epsilon=c_0/c_h$ on a fixed mesh of $40 \times 40$ elements obtained with the fully-discrete semi-implicit scheme for increasing values of the cleaning speed $c_h$. In all simulations we have set $c_0=1$.}
	\label{tab:SIMM_AP}
\end{table}

\section{Conclusion}
\label{sec.conclusion}

In this paper we have provided an original variational derivation of the augmented hyperbolic GLM divergence cleaning system of Munz \textit{et al.} \cite{MunzCleaning}, which is a very interesting symmetric hyperbolic and thermodynamically compatible (SHTC) system. We furthermore have provided a formal asymptotic analysis for the limit $\epsilon = c_0 / c_h \to 0$ as well as its extension to general nonlinear energy potentials and to the case of special relativity. We also have established a direct connection to Hamiltonian mechanics and have found the associated Poisson bracket. To the best of our knowledge, the Maxwell-Munz system studied in this paper goes beyond the existing set of known SHTC systems. Concerning its numerical discretization we have discussed two different types of exactly structure-preserving schemes. A first set of semi-discrete cell-centered HTC finite volume schemes that employ collocated grids and that are exactly compatible with the total energy conservation law, but which do in general not respect the basic identities of vector calculus. We then have also proposed a second fully-discrete method that uses vertex-based staggered grids, employs compatible mimetic finite difference operators for the discrete gradients so that all basic vector calculus identities are exactly satisfied also at the discrete level. Furthermore, the second scheme also conserves total energy exactly at the fully-discrete level. It furthermore is asymptotically consistent with the limit when $\epsilon = c_0 / c_h \to 0$, as shown by numerical evidence. We also have provided some numerical results concerning the extension of both schemes to the fully nonlinear case. In all cases the numerical results confirm the theoretical expectations. 

Future work will concern the extension of the staggered semi-implicit HTC schemes introduced in this work to other more complex SHTC systems, including also numerical dissipation and entropy production, which were both not present in the methods developed in this paper. We also plan to apply the present schemes to the system of Buchman \textit{et al.} in the context of new structure-preserving schemes for numerical general relativity.  

{\small
\section*{Acknowledgements}
	This work was financially supported by the Italian Ministry of Education, University 
	and Research (MIUR) in the framework of the PRIN 2022 project \textit{High order structure-preserving semi-implicit schemes for hyperbolic equations}, the PRIN 2022 project No. 2022N9BM3N \textit{Efficient numerical schemes and optimal control methods for time-dependent partial differential equations} and via the  Departments of Excellence  Initiative 2018--2027 attributed to DICAM of the University of Trento (grant L. 232/2016). 
	MD and AL were also funded by the European Union Next Generation EU project PNRR Spoke 7 CN HPC
	and by the European Research Council (ERC) under the European Union’s Horizon 
	2020 research and innovation programme, Grant agreement No. ERC-ADG-2021-101052956-BEYOND. 
	Views and opinions expressed are however those of the author(s) only and
	do not necessarily reflect those of the European Union or the European Research
	Council. Neither the European Union nor the granting authority can be held
	responsible for them. 
	MD, AL and IP are members of the Gruppo Nazionale Calcolo Scientifico-Istituto Nazionale di Alta Matematica (GNCS-INdAM). 
	\\
	\textbf{This paper is dedicated to Prof. Claus-Dieter Munz on the occasion of his $70^{\textnormal{th}}$ birthday}. 

}

\section*{Appendix}
This appendix aims to verify the discrete vector calculus identities \eqref{eqn.rot.grad} and \eqref{eqn.div.rot}.

To simplify the computation, we restrict the discussion to two-dimensional case, i.e. we assume that $ \frac{\partial}{\partial z} $ vanishes for all fields and thus, we assume a two-dimensional physical domain covered by a set of equidistant and non-overlapping Cartesian control volumes $ \Omega_{i,j} = [x_{i-\halb},x_{i+\halb}]\times[y_{j-\halb}, y_{j+\halb}]$ with uniform mesh spacings of size $ \Delta x = x_{i+\halb}-x_{i-\halb} $ and $ \Delta y = y_{j+\halb}-y_{j-\halb} $ in $ x$ and $ y $ direction, respectively, where $ x_{i\pm\halb} =x_i\pm\frac{\Delta x}{2}$ and $ y_{j\pm\halb} =y_j\pm\frac{\Delta y}{2}$.
For a schematic view of the grid, we refer again to Fig.\ref{fig:Grid} where $ c $ represents a pair of indices of the type $ (i,j) $, while $ p $ is of the form $ (i\pm\halb, j\pm\halb) $.

Let us introduce a scalar field in the barycenters of the primal control volume $ \Omega_{i,j} $, i.e. $ \phi_{i,j} $, then the discrete nabla operator \eqref{eqn.nablapc} defines a discrete gradient of $ \phi $ in all vertices of the primal mesh, i.e.
\begin{equation}
	\nabla_{i+\halb,j+\halb}^c \phi_c = 
	\begin{pmatrix}
		\displaystyle\frac{1}{2\Delta x}\left(\phi_{i+1,j+1}+\phi_{i+1,j}-\phi_{i,j+1}-\phi_{i,j}\right)\\
		\displaystyle\frac{1}{2\Delta y}\left(\phi_{i+1,j+1}+\phi_{i,j+1}-\phi_{i+1,j}-\phi_{i,j}\right)\\
		0 \\
	\end{pmatrix}\!.
\end{equation}
Applying the discrete curl operator to the last term, it is straightforward to verify that we obtain a null identity, that is 
\begin{align}
	(\nabla_{i,j}^p \times &\nabla_p^c \phi_c) \cdot \mathbf{e_z} = \nonumber\\
		&+\frac{\phi_{i+1,j+1}+\phi_{i,j+1}-\phi_{i+1,j}-\phi_{i,j}+\phi_{i+1,j}+\phi_{i,j}-\phi_{i+1,j+1}-\phi_{i,j-1}}{4\Delta x\Delta y}\nonumber\\
		&-\frac{\phi_{i,j-1}+\phi_{i-1,j+1}-\phi_{i,j}-\phi_{i-1,j}+\phi_{i,j}+\phi_{i-1,j}-\phi_{i,j-1}-\phi_{i-1,j-1}}{4\Delta x\Delta y}\nonumber\\
		&-\frac{\phi_{i+1,j+1}+\phi_{i+1,j}-\phi_{i,j+1}-\phi_{i,j}+\phi_{i,j+1}+\phi_{i,j}-\phi_{i-1,j+1}-\phi_{i-1,j}}{4\Delta x\Delta y}\nonumber\\
		&+\frac{\phi_{i+1,j}+\phi_{i+1,j-1}-\phi_{i,j}-\phi_{i,j-1}+\phi_{i,j}+\phi_{i,j-1}-\phi_{i-1,j}-\phi_{i-1,j-1}}{4\Delta x\Delta y}\nonumber\\
		&=0,
\end{align}
where $ \mathbf{e_z} =(0, 0, 1) $ is  the third unit vector  in Cartesian coordinates.
We have therefore shown that the continuous identity $\nabla \times \nabla \phi = 0$ has
a discrete analog represented by 
the relation 
\begin{equation}
	\nabla_c^p \times \nabla_p^c \, \phi_c = 0\,.
\end{equation}
In the same way one can prove the dual identity $ \nabla_p^c \times \nabla_c^p \, \phi_p = 0.  $

Similarly, we introduce a vector field $ \A_{i,j} $ defined in the $ \Omega_{i,j} $ barycenter and we compute $ \nabla_p^c \times\A_c $, that is
\begin{multline}
	\nabla_{i+\halb,j+\halb}^c \times\mathbf{A}_c =\\
	\begin{pmatrix}
		\displaystyle\frac{A_{3,i+1,j+1}+A_{3,i,j+1}-A_{3,i+1,j}-A_{3,i,j}}{2\Delta y}\\
		-\displaystyle\frac{A_{2,i+1,j+1}+A_{2,i+1,j}-A_{2,i,j+1}-A_{2,i,j}}{2\Delta x}\\
		\displaystyle\frac{ A_{2,i+1,j+1}+A_{2,i+1,j}-A_{2,i,j+1}-A_{2,i,j}}{2\Delta x}-\displaystyle\frac{A_{1,i+1,j+1}+A_{1,i,j+1}-A_{1,i+1,j}-A_{1,i,j}}{2\Delta y} \\
	\end{pmatrix}.
\end{multline}
Applying the discrete divergence to the discrete curl of $\A$, it is possible to verify that the continuous identity $\nabla \cdot \nabla \times \mathbf{A} = 0 $ holds also at the discrete level. Indeed, we have
\begin{align}
	\nabla_{i,j}^p \cdot &\nabla_p^c \times \mathbf{A}_c = \nonumber\\
	&+\frac{A_{3,i+1,j+1}+A_{3,i,j+1}-A_{3,i+1,j}-A_{3,i,j}+A_{3,i+1,j}+A_{3,i,j}-A_{3,i+1,j-1}-A_{3,i,j-1}}{4\Delta x\Delta y}\nonumber\\
	&-\frac{A_{3,i,j+1}+A_{3,i-1,j+1}-A_{3,i,j}-A_{3,i-1,j}+A_{3,i,j}+A_{3,i-1,j}-A_{3,i,j-1}-A_{3,i-1,j-1}}{4\Delta x\Delta y}\nonumber\\
	&-\frac{A_{3,i+1,j+1}+A_{3,i+1,j}-A_{3,i,j+1}-A_{3,i,j}+A_{3,i,j+1}+A_{3,i,j}-A_{3,i-1,j+1}-A_{3,i-1,j}}{4\Delta x\Delta y}\nonumber\\
	&+\frac{A_{3,i+1,j}+A_{3,i+1,j-1}-A_{3,i,j}-A_{3,i,j-1}+A_{3,i,j}+A_{3,i,j-1}-A_{3,i-1,j}-A_{3,i-1,j-1}}{4\Delta x\Delta y}\nonumber\\
	&=0.
\end{align}
In the same way one can prove the dual identity $ \nabla_p^c \cdot \nabla_c^p \times \A_p = 0.  $

%%%%%%%%%% Insert bibliography here %%%%%%%%%%%%%%
\printbibliography

\end{document}